\begin{document}

\theoremstyle{plain}
\newtheorem{thm}{Theorem}[section]
\newtheorem{cor}[thm]{Corollary}
\newtheorem{con}[thm]{Conjecture}
\newtheorem{cla}[thm]{Claim}
\newtheorem{lm}[thm]{Lemma}
\newtheorem{prop}[thm]{Proposition}
\newtheorem{example}[thm]{Example}

\theoremstyle{definition}
\newtheorem{dfn}[thm]{Definition}
\newtheorem{alg}[thm]{Algorithm}
\newtheorem{prob}[thm]{Problem}
\newtheorem{rem}[thm]{Remark}

\newcommand{\m}{\textnormal{mult}}
\newcommand{\ra}{\rightarrow}
\newcommand{\la}{\leftarrow}

\newcommand{\A}{\mathcal{A}}
\newcommand{\B}{\mathcal{B}}
\newcommand{\D}{\mathcal{D}}
\newcommand{\E}{\mathcal{E}}
\newcommand{\F}{\mathcal{F}}

\renewcommand{\baselinestretch}{1.1}

\title{\bf An Analogue of Hilton-Milner Theorem \\ for Set
Partitions}
\author{
Cheng Yeaw Ku
\thanks{ Department of Mathematics, National University of
Singapore, Singapore 117543. E-mail: matkcy@nus.edu.sg} \and Kok
Bin Wong \thanks{
Institute of Mathematical Sciences, University of Malaya, 50603
Kuala Lumpur, Malaysia. E-mail:
kbwong@um.edu.my.} } \maketitle

\begin{abstract}\noindent
Let $\mathcal{B}(n)$ denote the collection of all set partitions
of $[n]$. Suppose $\mathcal{A} \subseteq \mathcal{B}(n)$ is a
non-trivial $t$-intersecting family of set partitions i.e. any two members of $\A$ have at least $t$ blocks in common, but there is no fixed $t$ blocks of size one which belong to all of them.
It is proved that for sufficiently large $n$ depending on $t$,
\[ |\mathcal{A}| \le B_{n-t}-\tilde{B}_{n-t}-\tilde{B}_{n-t-1}+t
\]
where $B_{n}$ is the $n$-th Bell number and $\tilde{B}_{n}$ is
the number of set partitions of $[n]$ without blocks of size
one. Moreover, equality holds if and only if $\mathcal{A}$ is
equivalent to
\[ \{ P \in \mathcal{B}(n): \{1\}, \{2\},\dots, \{t\}, \{i\} \in P~~\textnormal{for
some}~i \not = 1,2,\dots, t,n \}\cup \{Q(i,n)\ :\ 1\leq i\leq t\} \]
where $Q(i,n)=\{\{i,n\}\}\cup\{\{j\}\ :\ j\in [n]\setminus \{i,n\}\}$. This is an
analogue of the Hilton-Milner theorem for set partitions.
\end{abstract}

\bigskip\noindent
{\sc keywords:} intersecting family, Hilton-Milner, Erd{\H
o}s-Ko-Rado, set partitions

\section{Introduction}

\subsection{Finite sets}

Let $[n]=\{1, \ldots, n\}$ and ${[n] \choose k}$ denote the
family of all $k$-subsets of $[n]$.

One of the most beautiful result in extremal combinatorics is
the Erd{\H o}s-Ko-Rado theorem (\cite{EKR}, \cite{Frankl}, \cite{Wilson}) which asserts that if a
family $\mathcal{A} \subseteq {[n] \choose k}$ is $t$-{\em
intersecting} (i.e. $\vert A\cap  B\vert\geq t$ for any $A,B\in \A$) and $n >2k-t$, then $|\mathcal{A}|
\le {n-t \choose k-t}$ for $n\geq (k-t+1)(t+1)$. 

\begin{thm}[Erd{\H o}s, Ko, and Rado \cite{EKR}, Frankl \cite{Frankl}, Wilson \cite{Wilson}]\label{EKR} Suppose $\mathcal{A} \subseteq {[n] \choose k}$ is $t$-intersecting and $n>2k-t$. Then for $n\geq (k-t+1)(t+1)$,
\begin{equation}
\vert \mathcal{A} \vert\leq {n-t \choose k-t}.\notag
\end{equation}
Moreover, if $n>(k-t+1)(t+1)$, equality holds if and only if $\mathcal{A}=\{A\in {[n] \choose k}\ :\ T\subseteq A\}$ for some $t$-set $T$.
\end{thm}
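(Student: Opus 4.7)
The plan is to establish Theorem~\ref{EKR} via the \emph{shifting method} of Frankl. For $1\leq i<j\leq n$, I would first define the $(i,j)$-shift $S_{ij}$ on $\A$: each $A\in\A$ with $j\in A$, $i\notin A$, and $(A\setminus\{j\})\cup\{i\}\notin\A$ is replaced by $(A\setminus\{j\})\cup\{i\}$, while all other members of $\A$ are left alone. A short check shows that $S_{ij}$ preserves $|\A|$ and the $t$-intersecting property, so by iterating until no further shift is possible I may assume $\A$ is \emph{shifted}, i.e., fixed by every $S_{ij}$.

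The bound is then obtained by induction on $n+k$, with the base cases ($k=t$, or $n$ close to $2k-t$) handled directly. For the inductive step, split $\A=\A_0\sqcup \A_1$, where
\[
\A_0=\{A\in\A\ :\ n\notin A\}\subseteq{[n-1] \choose k},\qquad \A_1=\{A\setminus\{n\}\ :\ A\in\A,\ n\in A\}\subseteq{[n-1] \choose k-1}.
\]
The family $\A_0$ is $t$-intersecting on $[n-1]$ and the inductive hypothesis gives $|\A_0|\leq{n-1-t\choose k-t}$. The main technical step is to show that shiftedness together with the threshold $n\geq (k-t+1)(t+1)$ promotes $\A_1$, which a priori is only $(t-1)$-intersecting, to a $t$-intersecting family on $[n-1]$: given $A,B\in\A$ both containing $n$ with $|(A\cap B)\setminus\{n\}|<t$, one uses shiftedness to replace $n$ in $A$ (or $B$) by a small unused element of $[n-1]$, whose existence is guaranteed by the threshold, producing a pair in $\A$ that violates the original $t$-intersection assumption. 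The induction then gives $|\A_1|\leq {n-1-t\choose k-1-t}$, and Pascal's identity yields
\[
|\A|\leq {n-1-t \choose k-t}+{n-1-t\choose k-1-t}={n-t\choose k-t}.
\]

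For the uniqueness statement when $n>(k-t+1)(t+1)$, I would trace equality through the induction: equality in both sub-estimates forces, inductively, both $\A_0$ and $\A_1$ to be stars on a common $t$-set $T\subseteq[n-1]$, making the shifted $\A$ equal to $\{A\in {[n]\choose k}\ :\ T\subseteq A\}$. A separate rigidity argument shows that only star families are preserved up to relabelling by the entire shifting process, so the original $\A$ must itself be a star on some $t$-set of $[n]$. The principal obstacle I anticipate is the promotion of $\A_1$ to a $t$-intersecting family; it is here that the sharp threshold $n\geq(k-t+1)(t+1)$ enters in a delicate way, and the same threshold ultimately governs the equality analysis.
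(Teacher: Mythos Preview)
The paper does not give its own proof of this statement: Theorem~\ref{EKR} is quoted as the classical Erd\H{o}s--Ko--Rado theorem, with attributions to \cite{EKR,Frankl,Wilson}, and is used only as a tool (in the proof of Theorem~\ref{Case_2}). So there is nothing in the paper to compare your argument against.

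That said, your outline is the standard Frankl shifting proof and is correct in spirit, but one point is misattributed. The fact that the link family $\A_1$ remains $t$-intersecting does \emph{not} hinge on the threshold $n\geq (k-t+1)(t+1)$; it follows from the much weaker hypothesis $n>2k-t$ via Frankl's lemma that any two members of a shifted $t$-intersecting family already meet in $t$ points inside $[2k-t]$. The threshold $(k-t+1)(t+1)$ enters instead through the induction hypothesis itself (one must check that the pairs $(n-1,k)$ and $(n-1,k-1)$ still satisfy it), and the boundary case $n=(k-t+1)(t+1)$ requires separate care since $(n-1,k)$ then falls below the threshold; indeed at that boundary other extremal configurations appear, which is why the uniqueness clause is stated only for $n>(k-t+1)(t+1)$. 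Your ``rigidity argument'' for undoing the shifts in the equality case is also left vague; the usual route is to show directly that a shift $S_{ij}$ can turn a non-star into a star only in very restricted circumstances, and to track this back step by step.
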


For a family $\mathcal{A}$ of $k$-subsets, $\mathcal{A}$ is said
to be {\em trivially} $t$-intersecting if there exists a $t$-set $T=\{x_1,\dots, x_t\}$ such that all members of $\mathcal{A}$ contains $T$. The
Erd{\H o}s-Ko-Rado theorem implies that a $t$-intersecting family
of maximum size must be trivially $t$-intersecting when $n$ is sufficiently large in terms of $k$ and $t$.

Hilton and Milner \cite{HM} proved a strengthening of the Erd{\H
o}s-Ko-Rado theorem for $t=1$ by determining the maximum size of a
non-trivial 1-intersecting family. A short and elegant proof was
later given by Frankl and F{\" u}redi \cite{FF} using the
shifting technique.

\begin{thm}[Hilton-Milner]\label{Hilton-Milner}
Let $\mathcal{A} \subseteq {[n] \choose k}$ be a non-trivial
1-intersecting family with $k \ge 4$ and $n>2k$. Then
\[ |\mathcal{A}| \le {n-1 \choose k-1} - {n-k-1 \choose k-1}+1.
\]
Equality holds if and only if
\[ \mathcal{A} = \{X \in {[n] \choose k}: x \in X, X \cap Y \not
= \emptyset\} \cup \{Y\}\]
for some $k$-subset $Y \in {[n] \choose k}$ and $x \in X
\setminus Y$.
\end{thm}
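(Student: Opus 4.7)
The plan is to adapt the Frankl--F\"uredi shifting argument. I would first apply $(i,j)$-shifts $S_{ij}$ for $i<j$: $S_{ij}$ replaces each $A\in \A$ containing $j$ but not $i$ by $(A\setminus\{j\})\cup\{i\}$ whenever the latter is not already in $\A$. This preserves $|\A|$ and the $1$-intersecting property. Iterating until no further shift changes $\A$ yields a shifted family of the same size. The delicate point --- and the main obstacle of the whole argument --- is that shifting can destroy non-triviality, so one must either arrest the shifting just before triviality would be lost and work with a partially shifted $\A$, or show that every shifted trivial family obtained this way already satisfies the Hilton--Milner bound with room to spare.

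Once $\A$ is assumed shifted and non-trivial, there exists $B\in \A$ with $1\notin B$, and by shifted-ness we may take $B=\{2,\dots,k+1\}$. Every set of $\A$ must meet $B$. Split $\A=\A_1\sqcup \A_0$ according to whether $1$ is in the set. Each $A\in \A_1$ contains $1$ and meets $B$, so
\[
|\A_1|\le \binom{n-1}{k-1}-\binom{n-k-1}{k-1},
\]
where the subtracted term counts the $k$-sets through $1$ disjoint from $B$.

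It remains to show $|\A_0|\le 1$, which would force $\A_0=\{B\}$ and yield the stated bound. Suppose $C\in \A_0$ with $C\neq B$; then $C$ meets $B$ and every $A\in \A_1$. Using $k\ge 4$ and $n>2k$, I would exhibit many $k$-sets through $1$ that meet $B$ but are disjoint from $C$, and then exploit shifted-ness to force at least one such set into $\A_1$, contradicting $C\cap A\neq \emptyset$. For the equality case, tightness forces $\A_1$ to be precisely the collection of $k$-sets containing $1$ that intersect $B$; undoing the shifts then recovers the claimed Hilton--Milner configuration with $x=1$ and $Y=B$ up to relabeling.

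The principal obstacle, as noted, is controlling non-triviality through the shifting process. A clean resolution is to identify the last shift before non-triviality would be destroyed and argue directly at that intermediate stage, using the presence of ``witness'' sets whose symmetric difference blocks the shift. A secondary subtlety is the rigorous exclusion of $|\A_0|\ge 2$: this step uses both $k\ge 4$ (so $B$ has enough room to build the blocking sets through $1$) and $n>2k$ (so there are enough elements outside $B$ to support the counting); for smaller $k$ the extremal configuration is degenerate and a separate check is needed.
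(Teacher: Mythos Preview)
The paper does not prove Theorem~\ref{Hilton-Milner}. It is stated as background: the result is attributed to Hilton and Milner~\cite{HM}, and the paper explicitly notes that ``a short and elegant proof was later given by Frankl and F\"uredi~\cite{FF} using the shifting technique.'' Your proposal is precisely an outline of that Frankl--F\"uredi argument, so in spirit it matches what the paper invokes, but there is no in-paper proof to compare against line by line.

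On the content of your sketch: the overall architecture is right, and you have correctly flagged the two genuine difficulties (preserving non-triviality under shifting, and the equality analysis). The weakest link is your step ``$|\A_0|\le 1$''. Exhibiting $k$-sets through $1$ that meet $B$ but miss $C$ does nothing by itself, since there is no reason such sets lie in $\A_1$; shiftedness is a closure property of $\A$, not a mechanism that forces external sets into $\A$. The way this step is actually carried out (in Frankl--F\"uredi and standard expositions) is different: one uses that in a shifted intersecting family with $n\ge 2k$, any two members intersect inside an initial segment, so if $C\in\A_0$ with $C\neq B$ one produces, by shifting $C$ itself, another member of $\A_0$ disjoint from $B$ or from $C$, a contradiction. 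Alternatively, one bounds $|\A_0|$ via a cross-intersecting inequality between $\A_0$ and the complements of $\A_1$. Either route is short, but your current phrasing does not yet contain the idea that makes it work.
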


\subsection{Permutations and set partitions}

The main result of this paper is motivated by recent investigations of the Erd{\H o}s-Ko-Rado type of problems for permutations and set partitions.

The study of intersecting families of permutations was initiated by Deza and Frankl \cite{DF} in the context of coding theory. Let $\mathrm{Sym}(n)$ denote the set of all permutations of $[n]$. A family $\mathcal{A} \subseteq \mathrm{Sym}(n)$ is $t$-{\em intersecting} if $\left|\{x: g(x)=h(x)\}\right| \ge t$ for any $g, h \in \A$. 

Recently, Ellis, Friedgut and Pilpel \cite{EFP} showed that for sufficiently large $n$ depending on $t$, a $t$-intersecting family $\A$ of permutations has size at most $(n-t)!$, with equality if and only if $\A$ is a coset of the stabilizer of $t$ points, thus settling an old conjecture of Deza and Frankl in the affirmative. The proof uses spectral methods and representations of the symmetric group. Subsequently, building on the representation theorectic approach, Ellis \cite{E} proved an analogue of the Hilton-Milner theorem  for $t$-intersecting families of permutations. The readers may also refer to \cite{CK, GM, HK, KL, KW, LM, Toku, WZ} for some recent results on the Erd{\H o}s-Ko-Rado type of problems.

On the other hand, recall that a set partition of $[n]$ is a collection of pairwise
disjoint nonempty subsets (called {\em blocks}) of $[n]$ whose
union is $[n]$. Let $\mathcal{B}(n)$ denote the family of all
set partitions of $[n]$. It is well-known that the size of
$\mathcal{B}(n)$ is the $n$-th Bell number, denoted by $B_{n}$.
A block of size one is also known as a {\em singleton}. We
denote the number of all set partitions of $[n]$ which are
singleton-free (i.e. without any singleton) by $\tilde{B}_{n}$.

A family
$\mathcal{A} \subseteq \mathcal{B}(n)$ is said to be $t$-{\em
intersecting} if any two of its members have at least $t$ blocks
in
common. It is {\em trivially} $t$-intersecting if it consists of set
partitions containing $t$ fixed singletons. Note that if it is trivially $t$-intersecting of maximum size then it consists of all set partitions containing the $t$ fixed singletons.

Motivated by the Erd{\H o}s-Ko-Rado theorem, Ku and Renshaw
\cite[Theorem 1.7 and Theorem 1.8]{KR} proved the following analogue of the Erd{\H o}s-Ko-Rado
theorem for set partitions.

\begin{thm}[Ku-Renshaw]\label{Ku-Renshaw}
Suppose $\mathcal{A} \subseteq \mathcal{B}(n)$ is
a $t$-intersecting family. Then for  $n\geq n_0(t)$, 
\[ |\mathcal{A}| \le B_{n-t}, \]
with equality if and only if $\mathcal{A}$ is a trivially
$t$-intersecting family of maximum size.
\end{thm}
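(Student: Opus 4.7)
The plan is to prove Ku--Renshaw's theorem by induction on $t$, reducing the problem via a "common-block peeling" argument to either the inductive hypothesis or the critical case where $\A$ has no common block; the latter is handled by a union bound together with asymptotic Bell number estimates.

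For the base case $t=0$, $|\A|\le B_n$ is trivial. For the inductive step, let $\A\subseteq\B(n)$ be a $t$-intersecting family of maximum size. First, if some singleton $\{i\}$ is common to all of $\A$, then $\A'=\{P\setminus\{\{i\}\}:P\in\A\}$ is a $(t-1)$-intersecting family of set partitions of $[n]\setminus\{i\}$, and induction yields $|\A|=|\A'|\le B_{(n-1)-(t-1)}=B_{n-t}$ with equality iff $\A'$ is trivially $(t-1)$-intersecting of maximum size, which unpeels to the desired trivially $t$-intersecting extremal family for $\A$. Next, if a common block $B$ of size $\ge 2$ exists, the same peeling gives the stronger bound $|\A|\le B_{n-|B|-(t-1)}\le B_{n-t-1}<B_{n-t}$, so equality is impossible in this sub-case.

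The critical case is when $\A$ has no common block. Here I must show $|\A|<B_{n-t}$ for $n\ge n_0(t)$. Pick a witness $P_0\in\A$; for every $P\in\A$, $|P\cap P_0|\ge t$, so $P$ contains some $t$-subset of $\mathrm{blocks}(P_0)$. A union bound yields
\[
|\A|\le\sum_{S\in\binom{\mathrm{blocks}(P_0)}{t}}B_{n-|\bigcup S|}.
\]
The $t$-subsets $S$ containing a block of size $\ge 2$ together contribute at most $\binom{|\mathrm{blocks}(P_0)|}{t}B_{n-t-1}=O(n^t)B_{n-t-1}=o(B_{n-t})$. The $t$-subsets $S$ of singletons of $P_0$ each contribute $B_{n-t}$ as a worst case, but by the no-common-block hypothesis, for each such $S$ at least one $P\in\A$ is missing $S$, giving $|\{P\in\A:S\subseteq P\}|<B_{n-t}$; a more careful double-count over pairs $(P,S)$ with $S\subseteq P\cap P_0$ then yields the strict inequality $|\A|<B_{n-t}$.

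The main obstacle is the no-common-block case. The naive union bound over singleton $t$-subsets gives $\binom{s}{t}B_{n-t}+o(B_{n-t})$, where $s$ is the number of singletons of $P_0$, which can exceed $B_{n-t}$ when $s>t$. Extracting strict inequality requires a stronger argument---most naturally either an iterated choice of the witness $P_0$ to rule out each candidate common singleton $t$-set, or a direct stability-style argument leveraging the super-exponential growth of Bell numbers (in particular $\tilde{B}_{n-t}/B_{n-t}\to 0$ as $n\to\infty$). This case is the crux of the entire proof, and the uniqueness of the extremal family falls out of forcing equality in both the inductive step and the stability estimate.
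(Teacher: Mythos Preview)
This theorem is not proved in the present paper; it is quoted from \cite{KR}. Section~2 does, however, summarize the machinery that \cite{KR} uses: one repeatedly applies the splitting operations $S_{ij}$ to $\A$ until reaching a compressed family $\A^{*}$ of the same size and still in $I(n,t)$ (Propositions~\ref{P1} and~\ref{P2}), and then Proposition~\ref{P3} says that $\sigma(\A^{*})$, the family of singleton-supports, is a $t$-intersecting family of \emph{subsets} of $[n]$. At that point one stratifies $\A^{*}$ by $|\sigma(P)|$ and applies the classical Erd\H{o}s--Ko--Rado bound on each stratum to get $|\A|=|\A^{*}|\le B_{n-t}$, with the equality case read off from equality in EKR. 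Your inductive peeling argument is a genuinely different route, and it is clean when $\A$ already has a common block; the point is that compression is what manufactures the common structure in the general case.

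Your proposal has a real gap in the no-common-block case, and you have essentially conceded it yourself. The union bound over $t$-subsets of blocks of a single witness $P_{0}$ yields roughly $\binom{s}{t}B_{n-t}+O(n^{t}B_{n-t-1})$ where $s=|\sigma(P_{0})|$, and nothing prevents $s$ from being large (indeed $s=n$ when $P_{0}$ is the all-singleton partition, which lies in every maximum $t$-intersecting family). The fix you sketch---observing that for each singleton $t$-set $S$ one has $|\{P\in\A:S\subseteq P\}|<B_{n-t}$ strictly, and then double-counting pairs $(P,S)$---buys you a saving of at most $\binom{s}{t}$, negligible against $\binom{s}{t}B_{n-t}$. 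Neither the ``iterated witness'' nor the ``stability-style'' suggestion is carried out, and it is not clear either can be made to work without something tantamount to compression. The splitting operation is precisely the missing idea: it forces the intersection structure of $\A^{*}$ to be visible at the level of the set system $\sigma(\A^{*})$, so that one is reduced to EKR for sets rather than fighting a hopeless union bound over one partition.
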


In view of the Hilton-Milner theorem, the aim of this paper is to determine the size and the structure of non-trivial $t$-intersecting families of set partitions. The analogy to set systems and permutations suggests that almost all members of
such a family should share $t$ common singletons. The following is
an example of a large non-trivial $t$-intersecting family.

Let $a_1,a_2,\dots, a_t, b \in [n]$, and all the $a_i$'s and $b$ are distinct. Let $Q(a_i,b) \in \B(n)$ be the set partition containing $\{a_i,b\}$ and $\{j\}$ for all $j \not = a_i,b$. Set $\mathcal Q=\{Q(a_1,b)\}\cup\cdots\cup \{Q(a_t,b)\}$. A Hilton-Milner type family is
given by
\[ \mathcal{H}(a_1,\dots,a_t,b) = \{ P \in \B(n): \{a_1\},\dots, \{a_t\}, \{c\} \in P
~\textrm{for some}~ c \not = a_1,\dots, a_t,b\} \cup \mathcal Q .\]

It is easily verified that
\[ |\mathcal{H}(a_1,\dots,a_t,b)| =
B_{n-t}-\tilde{B}_{n-t}-\tilde{B}_{n-t-1}+t.\]
Indeed, the number of set partitions having $\{a_1\}$, $\{a_2\}$, $\dots$, $\{a_t\}$ as the only
singletons is $\tilde{B}_{n-t}$, and the number of set partitions
having $\{a_1\}$, $\{a_2\}$, $\dots$, $\{a_t\}$  and $\{b\}$ as the only singletons is
$\tilde{B}_{n-t-1}$.

Using an analogue of the shifting operation for set partitions
(called the {\em splitting} operation) first introduced by Ku
and Renshaw in \cite{KR}, we prove the following analogue of the 
Hilton-Milner theorem.

\begin{thm}\label{main}
Suppose $\mathcal{A}$ is a non-trivial $t$-intersecting family of
set partitions of $[n]$. Then, for $n\geq n_0(t)$,
\[ |\mathcal{A}| \le
B_{n-t}-\tilde{B}_{n-t}-\tilde{B}_{n-t-1}+t,\]
with equality if and only if $\mathcal{A} = \mathcal{H}(a_1,\dots,a_t,b)$
for some $a_1,\dots, a_t, b \in [n]$.
\end{thm}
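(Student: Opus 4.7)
The strategy is to apply the splitting compression of Ku and Renshaw to reduce $\mathcal{A}$ to a splitting-stable family $\mathcal{A}^*$ of the same cardinality, and then bound $|\mathcal{A}^*|$ via a case analysis. Iterated splittings give $\mathcal{A}^*$ with $|\mathcal{A}^*| = |\mathcal{A}|$ and $\mathcal{A}^*$ still $t$-intersecting. The subtlety is that splitting need not preserve non-triviality, so two cases arise depending on whether $\mathcal{A}^*$ is non-trivially or trivially $t$-intersecting.

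In the case that $\mathcal{A}^*$ remains non-trivial, its stability under splittings gives strong structural information: for every $P \in \mathcal{A}^*$, further splittings applied to its non-singleton blocks must either produce a partition already in $\mathcal{A}^*$ or violate a $t$-intersection with some other member. I would stratify $\mathcal{A}^*$ by its set of common singleton blocks, apply Theorem~\ref{Ku-Renshaw} on a smaller ground set to bound each stratum, and combine the contributions; this case should yield a bound comfortably below the target.

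In the case that $\mathcal{A}^*$ becomes trivially $t$-intersecting, with say $\{1\},\dots,\{t\}$ as common singletons of every member, the bound $B_{n-t}$ from Theorem~\ref{Ku-Renshaw} is too weak by itself. Since $|\mathcal{A}^*| = |\mathcal{A}|$ while $\mathcal{A}$ is non-trivial, there must be a witness $P \in \mathcal{A}$ missing some $\{i\}$ with $i \in [t]$; let $B$ be the block of $P$ containing $i$, so $|B|\geq 2$. The $t$-intersection condition $|P \cap Q| \ge t$ for all $Q \in \mathcal{A}$ then forces every other member to share $t$ blocks with $P$ outside $B$. A careful counting — exploiting that for $n$ large the witness must essentially be of the form $Q(i,b)$ for some $b \in [n]\setminus[t]$, and that any compatible $Q$ either contains $\{1\},\dots,\{t\}$ along with an additional singleton $\{c\}$ with $c \ne b$, or is itself a partition $Q(j,b)$ — gives $B_{n-t}-\tilde{B}_{n-t}-\tilde{B}_{n-t-1}$ admissible $Q$'s of the first type and at most $t$ of the second, summing to the claimed bound.

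The main obstacle is the trivial-after-shifting case: one must extract sharp rigidity from the mere existence of a single non-trivial witness $P$ and propagate it to every element of $\mathcal{A}$. The equality case follows by running this analysis backward: the extremal structure forces all $Q(j,b)$-type witnesses to share the same element $b$, which pins $\mathcal{A}$ down to $\mathcal{H}(a_1,\dots,a_t,b)$ up to relabeling.
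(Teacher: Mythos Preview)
Your case split is essentially inverted relative to the paper, and each branch has a genuine gap.

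\textbf{Case 1 (non-trivial after compression).} You assert this case ``should yield a bound comfortably below the target,'' but the extremal family $\mathcal{H}(a_1,\dots,a_t,b)$ is itself compressed and non-trivially $t$-intersecting: for any $P\in\mathcal{H}$ and any $i,j$, one checks $s_{ij}(P)\in\mathcal{H}$, so $S_{ij}(\mathcal{H})=\mathcal{H}$. Hence Case~1 is precisely where the sharp bound is attained, and the vague plan of ``stratify by common singletons and apply Theorem~\ref{Ku-Renshaw}'' cannot give a strict inequality here. The paper handles this case (its Theorem~3.6) by transferring the $t$-intersection to the set family $\sigma(\A^*)$, applying Erd\H{o}s--Ko--Rado level by level, and doing a delicate case analysis on $\F_{t+1}=\sigma(\A^*)\cap\binom{[n]}{t+1}$ to pin down $\A^*=\mathcal{H}(a_1,\dots,a_t,b)$ exactly.

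\textbf{Case 2 (trivial after compression).} Your argument mixes the structures of $\A$ and $\A^*$. You draw the witness $P$ from the original $\A$ (correctly, since $\A$ is non-trivial), but then assume every other $Q\in\A$ contains the singletons $\{1\},\dots,\{t\}$ --- a property that holds only for members of $\A^*$, not of $\A$. Without that, the dichotomy ``$Q$ has $\{1\},\dots,\{t\},\{c\}$ or $Q=Q(j,b)$'' fails. Moreover, the key step ``for $n$ large the witness must essentially be of the form $Q(i,b)$'' is asserted without proof; it is not a consequence of $t$-intersection with a single large family and is in fact close in strength to the theorem itself. The paper avoids this case entirely: its Theorem~3.5 shows, by a substantial case analysis (Lemma~3.4 plus several pages), that a maximum-size non-trivial family that is not yet compressed always admits \emph{some} splitting $S_{kl}$ that keeps it non-trivial. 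Thus one can steer the compression so that $\A^*$ lands in Case~1.

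\textbf{Equality.} Even granting the bound, you need to recover $\A$ from $\A^*$. The paper proves a separate ``undo'' result (Proposition~2.5): if $S_{ij}(\A)=\mathcal{H}(a_1,\dots,a_t,b)$ and $\A$ is $t$-intersecting, then already $\A=\mathcal{H}(a_1,\dots,a_t,b)$. This is not automatic and requires its own case analysis; your ``running the analysis backward'' does not supply it.
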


\section{Splitting operation}

In this section, we summarize some important results regarding
the splitting operation for intersecting family of set
partitions. We refer the reader to \cite{KR} for proofs which
are omitted here.

Let $i, j \in [n]$, $i \not = j$, and $P \in \mathcal{B}(n)$.
Denote by
$P_{[i]}$ the block of $P$ which contains $i$. We define the
$(i,j)$-{\it
split} of $P$ to be the following set partition:
\[ s_{ij}(P) = \left\{\begin{array}{ll}
P \setminus \{P_{[i]}\} \cup \{\{i\}, P_{[i]}\setminus\{i\}\} &
\textrm{if } j \in P_{[i]}, \\
P      &   \textrm{otherwise. }
\end{array} \right.   \]
For a family $\mathcal{A} \subseteq \B(n)$, let
$s_{ij}(\mathcal{A}) = \{ s_{ij}(P):
P \in \mathcal{A}\}$. Any family $\mathcal{A}$ of set partitions
can be decomposed
with respect to given $i$, $j \in [n]$ as follows:
\begin{eqnarray}
\A & = & (\A \setminus \A_{ij}) \cup \A_{ij}, \nonumber
\end{eqnarray}
where $\A_{ij} = \{ P \in \A: s_{ij}(P) \not \in \A\}$. Define
the
$(i,j)$-{\em splitting} of $\A$ to be the family
\[ S_{ij}(\A) = (\A \setminus \A_{ij}) \cup s_{ij}(\A_{ij}). \]

Let $I(n,t)$ denote the set of all $t$-intersecting families of
set partitions of $[n]$. Surprisingly, it turns out that for any
$\A \in I(n,t)$, splitting operations preserve the size and the
intersecting property.

\begin{prop}[\cite{KR}, Proposition 3.2] \label{P1}
 Let $\A \in I(n,t)$. Then
$S_{ij}(\A) \in I(n,t)$ and $|S_{ij}(\A)|=|\A|$.
\end{prop}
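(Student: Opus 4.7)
The plan is to prove the two claims separately, with the size equality as bookkeeping and the preservation of the $t$-intersecting property as the substantive step requiring a case analysis. My first step would be the observation that $\A_{ij}$ consists entirely of partitions $P$ with $j\in P_{[i]}$, because if $j\notin P_{[i]}$ then $s_{ij}(P)=P\in \A$ contradicts $P\in \A_{ij}$; this remark is used throughout the argument. For the size claim it then suffices to check (i) that $s_{ij}$ is injective on $\A_{ij}$, since it is inverted on its image by reabsorbing the new singleton $\{i\}$ into the unique block containing $j$, and (ii) that $s_{ij}(\A_{ij})$ is disjoint from $\A\setminus \A_{ij}$, which is immediate from the defining condition $s_{ij}(P)\notin \A$. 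Together these give $|S_{ij}(\A)|=|\A\setminus \A_{ij}|+|\A_{ij}|=|\A|$.

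For the intersecting property I would take $P,Q\in S_{ij}(\A)$ and split into three cases. If both lie in $\A\setminus \A_{ij}$ there is nothing to prove. If $P=s_{ij}(P_0)$ and $Q=s_{ij}(Q_0)$ with $P_0,Q_0\in \A_{ij}$, then by the initial remark both $P_0,Q_0$ have $i,j$ in one block of each; case-checking $P_{0,[i]}=Q_{0,[i]}$ against $P_{0,[i]}\neq Q_{0,[i]}$ and tracking how the common blocks evolve (the new singleton $\{i\}$ is always produced in both, and if $P_{0,[i]}=Q_{0,[i]}$ the old common block is replaced by two new common blocks) shows $|P\cap Q|\geq |P_0\cap Q_0|\geq t$ in both sub-cases. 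The mixed case $P\in \A\setminus\A_{ij}$, $Q=s_{ij}(Q_0)$ is where I expect the main obstacle: most sub-cases are routine because splitting $Q_0$ can only create new matches with $P$, but one sub-case is delicate, namely $j\in P_{[i]}$ with $P_{[i]}=Q_{0,[i]}$, in which the common block $P_{[i]}=Q_{0,[i]}$ is destroyed without any compensating match being created in $P$.

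To rescue this sub-case I would exploit the fact that $P\notin \A_{ij}$ forces $s_{ij}(P)\in \A$, producing the auxiliary intersecting pair $(s_{ij}(P),Q_0)$ with $|s_{ij}(P)\cap Q_0|\geq t$. A short comparison shows $|s_{ij}(P)\cap Q_0|=|P\cap Q_0|-1$, since the common block $P_{[i]}=Q_{0,[i]}$ disappears when $P$ is split and neither of the replacement blocks $\{i\}$, $P_{[i]}\setminus\{i\}$ is a block of $Q_0$; hence $|P\cap Q_0|\geq t+1$, and since the same common block is also lost when passing from $Q_0$ to $Q$, we conclude $|P\cap Q|=|P\cap Q_0|-1\geq t$. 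This argument also clarifies the asymmetric definition $\A_{ij}=\{P\in \A:s_{ij}(P)\notin \A\}$: it is calibrated so that each partition removed from $\A$ during splitting leaves behind a partner $s_{ij}(P)\in \A$, and this partner serves as the reserve used to recover the intersection bound in the delicate sub-case.
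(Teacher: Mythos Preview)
Your proof is correct in all details; the size claim and all three cases of the intersecting property go through exactly as you describe, and your handling of the delicate mixed sub-case via the auxiliary pair $(s_{ij}(P),Q_0)$ is precisely the standard trick. Note that the present paper does not actually prove this proposition---it quotes it from \cite{KR} and explicitly omits the argument---so there is no in-paper proof to compare against; your write-up supplies what the paper defers to the original source.
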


A family $\A$ of set partitions is {\it compressed} if for any
$i,j
\in [n]$, $i \not = j$, we have $S_{ij}(\A)=\A$. For a set
partition
$P$, let $\sigma(P)=\{ x : \{x\} \in P\}$ denote the union of
its
singletons (block of size $1$). For a family $\A$ of set
partitions,
let $\sigma(\A)=\{\sigma(P): P \in \A\}$. Note that $\sigma(\A)$
is a family of subsets of $[n]$.

\begin{prop}[\cite{KR}, Proposition 3.3] \label{P2}
Given a family $\A \in I(n,t)$, by repeatedly applying the splitting
operations, we eventually obtain a compressed family $\A^{*} \in I(n,t)$ with $|\A^{*}|=|\A|$.
\end{prop}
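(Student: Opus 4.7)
The plan is to invoke Proposition~\ref{P1} for the structural part and then close the argument with a monovariant that forces the splitting process to terminate.

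By Proposition~\ref{P1}, every splitting $S_{ij}$ keeps us inside $I(n,t)$ and preserves cardinality, so any finite sequence of splittings applied to $\A$ produces a family in $I(n,t)$ of the same size as $\A$. What remains is to show that after finitely many steps we reach some $\A^{*}$ with $S_{ij}(\A^{*}) = \A^{*}$ for every pair $i \neq j$. Note that, by the definition of $\A_{ij}$, the set $s_{ij}(\A_{ij})$ is disjoint from $\A$, so $S_{ij}(\A) = \A$ is equivalent to $\A_{ij} = \emptyset$; in other words, $\A^{*}$ is compressed precisely when no further splitting changes it.

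To prove termination, I would introduce the monovariant
\[
w(\F) \;:=\; \sum_{P \in \F} |\sigma(P)|,
\]
namely the total number of singleton blocks across the family. I claim that $w$ is strictly increased by any splitting that actually changes the family. Suppose $S_{ij}(\A) \neq \A$, so $\A_{ij} \neq \emptyset$. For any $P \in \A_{ij}$ one has $s_{ij}(P) \neq P$, which forces $j \in P_{[i]}$ with $|P_{[i]}| \geq 2$. In this case $s_{ij}(P)$ is obtained from $P$ by detaching $\{i\}$ as a new singleton (and, when $P_{[i]} = \{i,j\}$, creating $\{j\}$ as a singleton as well), so $|\sigma(s_{ij}(P))| \geq |\sigma(P)| + 1$. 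Since $\A \setminus \A_{ij}$ is unchanged and every $P \in \A_{ij}$ gains at least one singleton after splitting, we conclude $w(S_{ij}(\A)) > w(\A)$.

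Because $w(\F) \leq n|\F|$ for any $\F \subseteq \B(n)$ and $|S_{ij}(\A)| = |\A|$ is preserved throughout by Proposition~\ref{P1}, the monovariant $w$ is bounded above along the entire splitting process. Hence only finitely many splittings can be non-trivial, and the stable family $\A^{*}$ we reach is compressed by the equivalence noted above. The only substantive obstacle is identifying a quantity that must strictly increase under a non-trivial splitting; because $s_{ij}$ is engineered to manufacture singletons, the singleton count $w$ is the natural and essentially forced choice (one could alternatively use the total number of blocks, with the same argument).
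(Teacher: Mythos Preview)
Your proof is correct: invoking Proposition~\ref{P1} for the invariants and using the total singleton count $w(\F)=\sum_{P\in\F}|\sigma(P)|$ as a strictly increasing, bounded monovariant is exactly the right termination argument, and your auxiliary observations (that $P\in\A_{ij}$ forces $s_{ij}(P)\neq P$, and that $S_{ij}(\A)=\A$ iff $\A_{ij}=\emptyset$) are all sound. The paper itself does not give a proof of this proposition---it simply cites \cite{KR}---so your write-up actually supplies more than the paper does; the monovariant you chose is the standard one and matches the spirit of the argument in \cite{KR}.
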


For a compressed family $\A$, its intersecting property can be
transferred to $\sigma(\A)$, thus allowing us to access the
structure of $\A$ via the structure of $\sigma(\A)$.

\begin{prop}[\cite{KR}, Proposition 3.4] \label{P3}
If $\A \in I(n,t)$ is compressed, then $\sigma(\A)$ is a
$t$-intersecting family of subsets of $[n]$.
\end{prop}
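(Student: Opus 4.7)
To show that $\sigma(\A)$ is a $t$-intersecting family of subsets of $[n]$, I will prove $|\sigma(P) \cap \sigma(Q)| \ge t$ for every pair $P,Q \in \A$ by using the compressed hypothesis to successively ``split away'' all common non-singleton blocks of $P$ and $Q$ while staying inside $\A$, and then applying the $t$-intersecting property to the modified pair. Decompose the common block set as $P \cap Q = C_s \sqcup C_n$, where $C_s$ consists of the common singletons (so $|C_s| = |\sigma(P) \cap \sigma(Q)|$) and $C_n = \{B_1,\ldots,B_m\}$ consists of the common blocks of size at least two.

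The preliminary ingredient is that compressedness is really a pointwise statement: $s_{ij}(P) \in \A$ for every $P \in \A$ and every $i \ne j$. Indeed $s_{ij}(\A_{ij})$ is disjoint from $\A$ by the very definition of $\A_{ij}$, so the identity $S_{ij}(\A) = \A$ forces $\A_{ij} = \emptyset$. For each $B_k$ I choose distinct $i_k,j_k \in B_k$ and iteratively define $P_0 := P$ and $P_k := s_{i_k j_k}(P_{k-1})$, obtaining $P^* := P_m \in \A$. Because $B_1,\ldots,B_m$ are pairwise disjoint blocks of $P$, $B_k$ survives intact in $P_{k-1}$ and the $k$-th split genuinely replaces it by the blocks $\{i_k\}$ and $B_k \setminus \{i_k\}$ in $P_k$.

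The remaining task is to compute $P^* \cap Q$. Every block of $P^*$ that was not created by one of the splits is an unaltered block of $P$, hence lies in $Q$ iff it already lay in $C_s$. On the other hand, none of the freshly created blocks $\{i_k\}$ or $B_k \setminus \{i_k\}$ can be a block of $Q$: since $B_k$ is already a block of $Q$ and $|B_k| \ge 2$, the singleton $\{i_k\}$ conflicts with $B_k$, and $B_k \setminus \{i_k\}$ is a proper nonempty subset of the block $B_k$ of $Q$. Therefore $P^* \cap Q = C_s$, so the $t$-intersecting property applied to $P^*, Q \in \A$ yields $|\sigma(P) \cap \sigma(Q)| = |C_s| = |P^* \cap Q| \ge t$.

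The main obstacle is really just the careful bookkeeping in this last step: ensuring (i) the successive splits do not interfere with one another (handled by the disjointness of the $B_k$) and (ii) the newly introduced blocks never happen to be blocks of $Q$ (handled by $B_k \in Q$ together with $|B_k| \ge 2$). Conceptually the argument reduces to the slogan: being compressed lets us freely shatter any common non-singleton block, so what survives as a common block afterwards is precisely the set of common singletons.
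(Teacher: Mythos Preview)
Your proof is correct and is essentially the standard argument: the paper itself omits the proof (citing \cite{KR}), but the identical technique of picking two points from each common non-singleton block and iteratively splitting appears verbatim later in the paper (in the proof of Theorem~\ref{Case_1}, equation~(\ref{e4})), so your approach coincides with the one the authors have in mind. Your observation that compressedness forces $\A_{ij}=\emptyset$, hence $s_{ij}(P)\in\A$ for all $P\in\A$, is exactly the pointwise property that drives this argument.
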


\begin{lm} \label{lm_ij}  Suppose $\A \in I(n,t)$ and $S_{ij}(\mathcal{A}) = \mathcal{H}(a_1,\dots ,a_t,b)$.  If 
\begin{itemize}
\item[\textnormal{(a)}] 
\begin{equation}
P_e=\{ \{a_1\}, \dots,\{a_t\}, \{e\}, [n]\setminus \{a_1,\dots, a_t,e\}\}\in \A,\notag\\
\end{equation}
for all $e\in [n]\setminus \{a_1,\dots, a_t,b\}$, and
\item[\textnormal{(b)}] $Q(a_l,b)\in \A$ for all $1\leq l\leq t$,
\end{itemize}
then $\A= \mathcal{H}(a_1,\dots ,a_t,b)$.
\end{lm}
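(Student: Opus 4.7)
The strategy is to show $\A\subseteq\mathcal{H}(a_1,\dots,a_t,b)$; combined with Proposition~\ref{P1}, which gives $|\A|=|S_{ij}(\A)|=|\mathcal{H}(a_1,\dots,a_t,b)|$, this containment forces equality. I will not need to track the splitting operation any further: conditions (a) and (b) already supply enough members of $\A$ to pin every other member down via the $t$-intersection property. Fix an arbitrary $P\in\A$. Each test partition $P_e$ has a rigid structure, namely the $t+1$ singletons $\{a_1\},\dots,\{a_t\},\{e\}$ together with one big block $[n]\setminus\{a_1,\dots,a_t,e\}$; each $Q(a_l,b)$ consists of the single pair block $\{a_l,b\}$ together with $n-2$ singletons. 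Requiring $P$ to share at least $t$ blocks with each of these partitions therefore translates directly into strong structural information about $P$.

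The main case split is on whether $P$ contains some block of the form $B=[n]\setminus\{a_1,\dots,a_t,e_0\}$ for an admissible $e_0$. If so, then for every other admissible $e\neq e_0$, $P$ cannot contain the big block of $P_e$ (two distinct big blocks overlap), so $P$ must share $t$ singletons with $P_e$ drawn from $\{\{a_1\},\dots,\{a_t\},\{e\}\}$. If $P$ were missing some $\{a_l\}$, then $\{e\}$ would have to be a singleton of $P$; but $e\in B$ already, a contradiction. Hence $\{a_1\},\dots,\{a_t\}\in P$, and since $B$ together with these $t$ singletons covers $n-1$ elements, the remaining element $e_0$ must form a singleton, giving $P=P_{e_0}\in\mathcal{H}(a_1,\dots,a_t,b)$.

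In the opposite case, $P$ shares at least $t$ singletons with every $P_e$. Set $S=\{a_j:\{a_j\}\in P\}$; the $t$-intersection with each $P_e$ forces $|S|\in\{t-1,t\}$. If $|S|=t$, then $P$ contains every $\{a_j\}$ as a singleton; intersecting $P$ with $Q(a_l,b)$ contributes the $t-1$ common singletons $\{a_j\}$, $j\neq l$, and the missing $t$-th common block can be neither $\{a_l,b\}$ (ruled out by $\{a_l\}\in P$) nor a further $\{a_j\}$, so it must be a singleton $\{c\}\in P$ with $c\neq a_1,\dots,a_t,b$, placing $P$ in the first part of $\mathcal{H}(a_1,\dots,a_t,b)$. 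If $|S|=t-1$, say $S=\{a_1,\dots,a_t\}\setminus\{a_l\}$, then for every admissible $e$ the $t$-th common singleton with $P_e$ must be $\{e\}$, so $\{e\}\in P$ for every $e\neq a_1,\dots,a_t,b$. Only $a_l$ and $b$ remain unplaced and must therefore form the block $\{a_l,b\}$, yielding $P=Q(a_l,b)\in\mathcal{H}(a_1,\dots,a_t,b)$.

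The main obstacle I anticipate is the first-case bookkeeping: verifying that the single global constraint ``$B\in P$'' together with the intersection conditions against the other $P_e$'s truly forces every $\{a_j\}$ into $P$ and then allows no room for $P$ other than $P_{e_0}$. Once that is done, the remaining sub-case analysis is essentially routine and relies only on $n\geq t+3$, so that there are enough admissible values of $e$ to drive each exclusion argument.
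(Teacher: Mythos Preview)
Your argument is correct and takes a genuinely different route from the paper's. The paper assumes there is some $P\in S_{ij}(\A)\setminus\A$, writes $P=s_{ij}(T)$ with $T\in\A\setminus S_{ij}(\A)$, and then performs an extensive case analysis on the position of $i$ and $j$ relative to $a_1,\dots,a_t,b$, in each case locating a member of $\A$ (one of the $P_{e}$'s or $Q(a_l,b)$'s) that meets $T$ in fewer than $t$ blocks. By contrast, you never look at the splitting at all beyond the size identity $|\A|=|S_{ij}(\A)|$ from Proposition~\ref{P1}: you show directly that any $P\in\A$ must lie in $\mathcal{H}(a_1,\dots,a_t,b)$ by testing $P$ against the $P_e$'s and the $Q(a_l,b)$'s supplied by hypotheses (a) and (b). This yields a shorter and more conceptual proof, free of the $(i,j)$ case split, and in fact establishes the slightly stronger statement that for $n\geq t+3$ any $t$-intersecting family containing all the $P_e$'s and all the $Q(a_l,b)$'s is already contained in $\mathcal{H}(a_1,\dots,a_t,b)$. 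The paper's approach, on the other hand, makes explicit exactly how the unsplitting could fail and so dovetails more visibly with the surrounding machinery in Proposition~\ref{undo}. One small remark: in your Case~1 the observation that $e\in B$ already rules out $\{e\}\in P$, so the deduction that every $\{a_j\}\in P$ is immediate rather than by contradiction.
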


\begin{proof} Suppose there is a $P\in S_{ij}(\A)\setminus \A$. Then $P=s_{ij}(T)$ for some $T\in\A$ and $T\notin S_{ij}(\A)$. 

\noindent
{\bf Case 1.} Suppose $i\neq a_1,\dots, a_t$. If $i\neq b$, then $P$ consists of $\{a_1\}$, $\dots$, $\{a_t\}$, $\{i\}$ and $B$, where $B$ is a set partition of $[n]\setminus\{a_1,\dots, a_t, i\}$. Suppose $B$ does not contain any singleton. 
If $j\neq a_1,\dots, a_t$, then the only singletons in $T$ are $\{a_1\}$, $\dots$, $\{a_t\}$. If $j=a_{l_1}$ for some $1\leq l_1\leq t$, then the only singletons in $T$ are $\{a_1\}$, $\dots$, $\{a_{l_1-1}\}$, $\{a_{l_1+1}\}$, $\dots$, $\{a_{t}\}$. In all cases, $T$ has no singletons other than  $\{a_1\}$, $\dots$, $\{a_t\}$.
Therefore $\vert T\cap Q(a_1,b)\vert\leq t-1$, contradicting the fact that $\A$ is $t$-intersecting. Similarly if $B$ contains the singleton $\{b\}$ or $\{j\}$ only, then $\vert T\cap Q(a_1,b)\vert\leq t-1$, a contradiction. Hence $B$ contains a singleton $\{e\}$ for some $e\neq b,j$. This means that $T$ contains the singletons $\{a_1\}$, $\dots$, $\{a_t\}$, $\{e\}$, and so $T \in \mathcal{H}(a_{1}, \ldots, a_{t}, b)$, contradicting the fact that $T \not \in S_{ij}(\A)$. 

If $i=b$, then $P$ consists of $\{a_1\}$, $\dots$, $\{a_t\}$, $\{b\}$, $\{e_1\}$ and $B$, where 
$e_1\in [n]\setminus \{a_1,\dots, a_t,b\}$ and  $B$ is a set partition of $[n]\setminus\{a_1,\dots, a_t, b, e_1\}$.
If $j\neq a_1,\dots, a_t$, then $T$ contains the singletons $\{a_1\}$, $\dots$, $\{a_t\}$, $\{e_1\}$, a contradiction. Suppose $j=a_{l_0}$ for some $1\leq l_0\leq t$. Suppose $B$ contains a block of size at least 2. Let $e_0$ be an element in this block. Note that $\vert P_{e_0}\cap T\vert=t-1$, a contradiction. So we may assume that $B$ consists of singletons, but then $T=Q(a_{l_0},b)$, a contradiction.

\noindent
{\bf Case 2.} Suppose $i=a_{l_0}$ for some $1\leq l_0\leq t$. Then $P$ consists of $\{a_1\}$, $\dots$, $\{a_t\}$, $\{e\}$ and $B$, where 
$e\in [n]\setminus \{a_1,\dots, a_t,b\}$ and  $B$ is a set partition of $[n]\setminus\{a_1,\dots, a_t, e\}$. Suppose $B$ contains a block of size at least 2. Let $e_0$ be an element in this block. We may assume $e_0\neq b$. Note that $\vert P_{e_0}\cap T\vert=t-1$, a contradiction. So we may assume that $B$ consists of singletons, but then $T=Q(a_{l_0},b)$, a contradiction.

Hence $S_{ij}(\A)=\A$.
\end{proof}

The following proposition says that a Hilton-Milner type family
is preserved when `undoing' the splitting operations.

\begin{prop} \label{undo}  Suppose $n\geq t+3$, $\A \in I(n,t)$ and  $S_{ij}(\mathcal{A}) = \mathcal{H}(a_1,\dots ,a_t,b)$. Then $\mathcal{A} = \mathcal{H}(a_1,\dots ,a_t,b)$.
\end{prop}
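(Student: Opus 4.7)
My plan is to apply Lemma~\ref{lm_ij}; it therefore suffices to verify its two hypotheses: (a) $P_e\in \A$ for every $e\in [n]\setminus\{a_1,\dots,a_t,b\}$, and (b) $Q(a_l,b)\in \A$ for every $1\le l\le t$.

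The central observation, extracted directly from the definition $S_{ij}(\A)=(\A\setminus \A_{ij})\cup s_{ij}(\A_{ij})$, is the following: \emph{if $P\in S_{ij}(\A)$ and $\{i\}$ is not a block of $P$, then $P\in \A$.} Indeed, either $P\in \A\setminus \A_{ij}\subseteq \A$, or $P=s_{ij}(T)$ for some $T\in \A_{ij}$; but $T\in \A_{ij}$ forces $T\neq s_{ij}(T)$, which by the definition of $s_{ij}$ means that $\{i\}$ is a block of $P=s_{ij}(T)$, contrary to our assumption on $P$.

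Armed with this, every member of $\mathcal{H}(a_1,\dots,a_t,b)$ in which $\{i\}$ is not a block lies automatically in $\A$; the remaining work concerns those $P\in \mathcal{H}(a_1,\dots,a_t,b)$ in which $\{i\}$ is a singleton. For such a $P$, if $P\notin \A$ then $P=s_{ij}(T)$ for the unique $T\in \A$ obtained by merging $\{i\}$ with the block of $P$ containing $j$. The strategy is to exhibit, in each case, some $R\in \A$ whose membership in $\A$ has already been verified (either via the observation or by a previous step), such that $|T\cap R|<t$, contradicting the $t$-intersecting property of $\A$.

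I would carry out the case analysis according to the position of $i$ and $j$ relative to $\{a_1,\dots,a_t,b\}$. As a representative case, suppose $i,j\notin \{a_1,\dots,a_t,b\}$. I first verify (b): if some $Q(a_l,b)\notin \A$, then $T=\{\{a_l,b\},\{i,j\}\}\cup\{\{c\}:c\in [n]\setminus\{a_l,b,i,j\}\}$, and taking $R=P_j$ — which has $i$ in its big block and is therefore in $\A$ by the observation — a direct count gives $|T\cap R|=t-1$, a contradiction. With (b) in hand, I verify (a): for $e\neq i$ the partition $P_e$ has $i$ in its big block and so lies in $\A$; for $e=i$ the hypothetical pre-image is $T=\{\{a_1\},\dots,\{a_t\},\,[n]\setminus\{a_1,\dots,a_t\}\}$, and $R=Q(a_l,b)\in \A$ (just established) yields $|T\cap R|=t-1$. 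The sub-cases where $i$ or $j$ lies in $\{a_1,\dots,a_t,b\}$ follow exactly the same template, with analogous pre-images $T$ and witnesses $R$. The main obstacle will be the bookkeeping across cases — choosing the witness $R$ so it is known to lie in $\A$ and ordering the verifications to avoid circularity — with the hypothesis $n\ge t+3$ ensuring that enough distinct elements exist for the witnesses to be available in every sub-case.
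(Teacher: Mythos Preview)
Your approach via Lemma~\ref{lm_ij} and the case analysis on the position of $i,j$ relative to $\{a_1,\dots,a_t,b\}$ is correct and essentially identical to the paper's proof; your representative case coincides with the paper's Case~1 (sub-case $j\neq b$), and the remaining cases are handled in the same spirit. One caveat worth flagging: in the sub-case $i=a_{l_0}\in\{a_1,\dots,a_t\}$, $j=b$, the only partition your observation places immediately in $\A$ is $Q(a_{l_0},b)$, and this meets the pre-image of each $P_e$ in exactly $t$ blocks, so that sub-case cannot be dispatched with a pre-established witness---the paper instead assumes two distinct $P_{e_0},P_{e_1}\notin\A$ and plays their pre-images off against each other, a minor variant still within your template.
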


\begin{proof} It is sufficient to show that conditions (a) and (b) of Lemma \ref{lm_ij} hold.

Let $e\in [n]\setminus \{a_1,\dots, a_t,b\}$ and 
\begin{equation}
P_e=\{ \{a_1\}, \dots,\{a_t\}, \{e\}, [n]\setminus \{a_1,\dots, a_t,e\}\}.\notag\\
\end{equation}
Note that $P_e\in S_{ij}(\A)$ and $\vert [n]\setminus \{a_1,\dots, a_t,e\}\vert\geq 2$. 

\noindent
{\bf Case 1.} Suppose $i,j\neq a_1,\dots, a_t$.  Assume that $e\neq i$. If $P_e\notin \A$, then $P_e=s_{ij}(T_e)$ for some $T_e\in\A$, a contradiction, for $i$ cannot be contained in a block of size greater than 1 after the splitting operation. So $P_e\in \A$ for all $e\in [n]\setminus \{a_1,\dots, a_t,b, i\}$. 

Suppose $j\neq b$. Now if $Q(a_l,b)\notin \A$ for some $1\leq l\leq t$, then 
\begin{align}
W_1 & =\{ \{a_l,b\}\}\cup \{ \{i,j\}\}\cup \{ \{q\}\ :\ q\in [n]\setminus \{a_l, b, i, j\}\}\in \A,\notag
\end{align}
and $\vert W_1\cap P_{j}\vert =t-1$, contradicting the fact that $\A$ is $t$-intersecting. So $Q(a_l,b)\in \A$ for all $1\leq l\leq t$. It remains to show that $P_i\in \A$ when $i\neq b$. If $P_i\notin \A$, then 
\begin{equation}
W_2=\{ \{a_1\}, \dots,\{a_t\}, [n]\setminus \{a_1,\dots, a_t\}\}\in \A,\notag
\end{equation}
a contradiction, for $\vert W_2\cap Q(a_1,b)\vert=t-1$. 

Suppose  $j=b$. Then $i\neq b$.
If $P_i\notin \A$, then $W_2\in \A$. If $Q(a_l,b)\notin \A$ for some $1\leq l\leq t$, then 
\begin{align}
W_3 & =\{ \{a_l,b,i\}\}\cup \{ \{q\}\ :\ q\in [n]\setminus \{a_l, b, i\}\}\in\A,\notag
\end{align}
and  $\vert W_2\cap W_3\vert=t-1$, a contradiction. If $Q(a_l,b)\in \A$ for some $1\leq l\leq t$, then 
 $\vert W_2\cap Q(a_l,b)\vert=t-1$, again a contradiction. Hence $P_i\in \A$. Since $\vert W_3\cap P_i\vert=t-1$, we conclude that $Q(a_l,b)\in \A$ for all $1\leq l\leq t$. 
 
\noindent
{\bf Case 2.} Suppose $i=a_{l_0}$ and $j=a_{l_1}$ for some $1\leq l_0, l_1\leq t$.  Without loss of generality assume that $l_0=1$ and $l_1=2$. Note that $Q(a_{1},b)\in \A$. Now if $P_e\notin \A$, then 
\begin{align}
W_1 & =\{ \{a_1,a_2\},\{a_3\}, \dots, \{a_t\}, \{e\}, [n]\setminus \{a_1,\dots, a_t,e\}\}\in\A,\notag
\end{align}
a contradiction, for $\vert Q(a_1,b)\cap W_1\vert=t-1$. So $P_e\in \A$ for all $e\in [n]\setminus \{a_1,\dots, a_t,b\}$.

Now if $Q(a_l,b)\notin \A$ for some $3\leq l\leq t$, then 
\begin{equation}
W_2=\{\{a_l,b\}\}\cup \{\{a_1,a_2\}\}\cup \{\{q\}\ :\ q\in [n]\setminus \{a_1,a_2,a_l,b\}\}\in \A,\notag
\end{equation}
a contradiction, as $\vert P_e\cap W_2\vert=t-2$. Next if $Q(a_2,b)\notin \A$, then 
\begin{equation}
W_3=\{\{a_1,a_2,b\}\}\cup \{\{q\}\ :\ q\in [n]\setminus \{a_1,a_2,b\}\}\in \A,\notag
\end{equation}
again a contradiction, as $\vert P_e\cap W_3\vert=t-1$. Thus $Q(a_l,b)\in \A$ for all $1\leq l\leq t$.

\noindent
{\bf Case 3.} Suppose $j=a_{l_0}$ for some $1\leq l_0\leq t$. Without loss of generality assume that $l_0=1$. 
As in Case 1, $P_e\in \A$ for all $e\in [n]\setminus \{a_1,\dots, a_t,b, i\}$. By Case 2, we may assume that $i\neq a_1,\dots, a_t$. 

Suppose $i=b$. Then $Q(a_{l},b)\in\A$ for all $1\leq l\leq t$, and we are done.

Suppose $i\neq b$. If $Q(a_1,b)\notin \A$, then 
\begin{equation}
W=\{\{a_1,b,i\}\}\cup \{ \{q\}\ :\ q\in [n]\setminus \{a_1,b,i\}\}\in \A.\notag
\end{equation}
 Since $P_i\in S_{ia_1}(\A)$, 
  and $\vert P_i\cap W\vert =t-1$, we must have 
\begin{equation}
R'=\{\{a_1,i\}, \{a_2\},\dots, \{a_t\}, [n]\setminus\{a_1,\dots, a_t,i\}\}\in \A,\notag
\end{equation}  
 but then $\vert R'\cap W\vert =t-1$, a contradiction. Hence $Q(a_1,b)\in \A$. Now  $\vert Q(a_1,b)\cap R'\vert =t-1$ implies that $P_i\in \A$. 
Next if $Q(a_l,b)\notin \A$ for some $2\leq l\leq t$, then 
\begin{equation}
W_2=\{\{a_l,b\}\}\cup \{\{a_1,i\}\}\cup \{\{q\}\ :\ q\in [n]\setminus \{a_1,i,a_l,b\}\}\in \A,\notag
\end{equation}
a contradiction, as $\vert P_i\cap W_2\vert=t-2$. Thus $Q(a_l,b)\in \A$ for all $1\leq l\leq t$.

\noindent
{\bf Case 4.} Suppose $i=a_{l_0}$ for some $1\leq l_0\leq t$. Without loss of generality assume that $l_0=1$.   
By Case 2, we may assume that $j\neq a_1,\dots, a_t$. 

Suppose $j=b$. Note that $Q(a_{1},b)\in\A$. Let $e_0,e_1\in [n]\setminus \{a_1,\dots, a_t,b\}$, $e_0\neq e_1$. If both $P_{e_0}$ and $P_{e_1}$ are not contained in $\A$, then $W_0, W_1\in \A$, where
\begin{align}
W_0 &=\{ \{a_2\}, \dots,\{a_t\}, \{e_0\}, [n]\setminus \{a_2,\dots, a_t,e_0\}\},\notag\\
W_1 &=\{ \{a_2\}, \dots,\{a_t\}, \{e_1\}, [n]\setminus \{a_2,\dots, a_t,e_1\}\}.\notag
\end{align}
We have obtained a contradiction, as $\vert W_0\cap W_1\vert=t-1$. So we may assume $P_{e_{0}}\in\A$. If $Q(a_l,b)\notin \A$ for some $2\leq l\leq t$, then 
\begin{align}
W_2 & =\{ \{a_l,b,a_1\}\}\cup \{ \{q\}\ :\ q\in [n]\setminus \{a_l, b, a_1\}\}\in\A,\notag
\end{align}
and  $\vert W_2\cap P_{e_0}\vert=t-1$, a contradiction. Thus $Q(a_{l},b)\in\A$ for all $1\leq l\leq t$. Since $\vert Q(a_2,b)\cap W_1\vert=t-1$, we conclude that $P_{e_1}\in \A$. In fact, $P_e\in \A$ for all $e\in [n]\setminus \{a_1,\dots, a_t,b\}$.

Suppose $j\neq b$. Note that $Q(a_{1},b)\in\A$. If $P_j\notin \A$, then 
\begin{align}
W_3 & =\{ \{a_1,j\}, \{a_2\},\dots, \{a_t\}, [n]\setminus \{a_1,\dots, a_t,j\}\}\in\A.\notag
\end{align}
This contradicts that $\A$ is $t$-intersecting as $\vert Q(a_1,b)\cap W_3\vert=t-1$. Thus $P_j\in \A$.
Now if $Q(a_l,b)\notin \A$ for some $2\leq l\leq t$, then 
\begin{equation}
W_4=\{\{a_l,b\}\}\cup \{\{a_1,j\}\}\cup \{\{q\}\ :\ q\in [n]\setminus \{a_1,j,a_l,b\}\}\in \A.\notag
\end{equation}
This contradicts that $\A$ is $t$-intersecting as $\vert P_j\cap W_4\vert=t-2$. Thus $Q(a_l,b)\in \A$ for all $1\leq l\leq t$.
Finally if $P_e\notin \A$ and $e\neq j$, then 
\begin{align}
W_5 &=\{ \{a_2\}, \dots,\{a_t\}, \{e\}, [n]\setminus \{a_1,a_2,\dots, a_t,e\}\}\in \A,\notag
\end{align}
a contradiction, for $\vert Q(a_2,b)\cap W_5\vert=t-1$. Hence $P_e\in \A$ for all $e\in [n]\setminus \{a_1,\dots, a_t,b\}$.
\end{proof}

\section{Proof of main result}

The following identities for $B_{n}$ and $\tilde{B}_{n}$ are straightforward.

\begin{lm}\label{identity}
Let $n \ge 2$. Then
\begin{eqnarray}
B_{n} & = & \sum_{k=0}^{n} {n \choose k} \tilde{B}_{n-k},
\label{e1} \\
\tilde{B}_{n} & = & \sum_{k=1}^{n-1} {n-1 \choose k}
\tilde{B}_{n-1-k}, \label{e2}
\end{eqnarray}
with the conventions $B_{0}=\tilde{B}_{0}=1$.
\end{lm}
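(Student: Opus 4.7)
The plan is to prove both identities by straightforward combinatorial double counting, partitioning $\mathcal{B}(n)$ (respectively, the family of singleton-free set partitions of $[n]$) according to a natural statistic of each member.

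For \eqref{e1}, I would classify each $P\in\mathcal{B}(n)$ by the set $\sigma(P)\subseteq [n]$ of elements that appear as singletons in $P$. If $|\sigma(P)|=k$, then there are $\binom{n}{k}$ choices for which $k$-subset of $[n]$ forms the singletons, and the remaining $n-k$ elements must be partitioned into blocks of size $\geq 2$, contributing $\tilde{B}_{n-k}$. Summing over $k=0,1,\ldots,n$ yields \eqref{e1}. The boundary conventions $B_0=\tilde{B}_0=1$ (the empty partition) handle the extreme terms correctly.

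For \eqref{e2}, I would classify each singleton-free set partition $P$ of $[n]$ by the block $B$ containing the element $1$. Because $P$ has no singletons, $|B|\geq 2$, so we may write $|B|=k+1$ with $k\geq 1$. There are $\binom{n-1}{k}$ ways to choose the other $k$ elements of $B$ from $[n]\setminus\{1\}$, and the remaining $n-1-k$ elements must themselves form a singleton-free partition, contributing $\tilde{B}_{n-1-k}$. Summing over $k=1,\ldots,n-1$ gives \eqref{e2}; note that the term $k=n-2$ contributes $0$ because $\tilde{B}_1=0$ (there is no singleton-free partition of a single element), and the term $k=n-1$ uses $\tilde{B}_0=1$.

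Since both arguments are elementary classifications and the boundary behaviour is dictated by the stated conventions, there is no real obstacle here — the only thing to be careful about is making sure the range of summation matches the allowed block sizes, and that the $k=0$ and $k=n$ endpoints are interpreted correctly using $\tilde{B}_0=1$. Hence both identities follow.
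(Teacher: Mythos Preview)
Your proof is correct and is exactly the kind of straightforward double-counting argument the paper has in mind; indeed, the paper simply states that these identities are ``straightforward'' and omits the proof entirely. There is nothing to add.
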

Note in passing that $\tilde{B}_{1}=0$. By (\ref{e1}) and (\ref{e2}),
\begin{equation}
B_n=\tilde{B}_n+\tilde{B}_{n+1}=B_{n-1}-\tilde{B}_{n-1}+\tilde{B}_{n+1}\leq B_{n-1}+\tilde{B}_{n+1}.\label{ttt}
\end{equation}
 Since $\lim_{n\to \infty} B_{n}/B_{n-1}=\infty$ (see \cite[Corollary 2.7]{Klazar}), we deduce that 
\begin{equation}
\lim_{n\to \infty} \tilde{B}_{n+1}/B_{n-1}=\infty.\label{inequal}
\end{equation}
 Next note that $(B_{n-1}-\tilde{B}_{n-1}-\tilde{B}_{n-2})/B_{n-2}=(\tilde{B}_{n}-\tilde{B}_{n-2})/B_{n-2}\geq \tilde{B}_{n}/B_{n-2}-1$. So $\lim_{n\to \infty} (B_{n-1}-\tilde{B}_{n-1}-\tilde{B}_{n-2})/B_{n-2}=\infty$ and Lemma \ref{less} follows. Lemma \ref{less02} follows by noting that $B_{n-r+1}\leq B_{n-t-3}$.

\begin{lm}\label{less}
Let $c$ be a fixed positive integer. Then, for $n\geq n_0(t)$,
\[ cB_{n-t-1} < B_{n-t}-\tilde{B}_{n-t}-\tilde{B}_{n-t-1}. \]
\end{lm}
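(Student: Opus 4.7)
The plan is to reduce the inequality to the growth estimate (\ref{inequal}) that has already been established in the paragraph immediately preceding the lemma. The key algebraic fact is the identity $B_{m}=\tilde{B}_{m}+\tilde{B}_{m+1}$ derived in (\ref{ttt}), which converts the right-hand side of the lemma into a difference of two $\tilde{B}$-values that is easier to control.

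First, I would apply this identity with $m=n-t$ to rewrite
\[
B_{n-t}-\tilde{B}_{n-t}-\tilde{B}_{n-t-1}=\tilde{B}_{n-t+1}-\tilde{B}_{n-t-1}.
\]
Then I would divide through by $B_{n-t-1}$ and use the trivial bound $\tilde{B}_{n-t-1}\leq B_{n-t-1}$ to get
\[
\frac{B_{n-t}-\tilde{B}_{n-t}-\tilde{B}_{n-t-1}}{B_{n-t-1}}\geq \frac{\tilde{B}_{n-t+1}}{B_{n-t-1}}-1,
\]
which is exactly the shifted version of the inequality the paper already wrote down between equations (\ref{inequal}) and the lemma.

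Finally, I would invoke the limit (\ref{inequal}) with $n$ replaced by the (fixed-shift of) $n-t-1$: since $t$ is fixed, $\tilde{B}_{n-t+1}/B_{n-t-1}\to\infty$ as $n\to\infty$. Consequently, for all sufficiently large $n$ (depending on $t$ and on the given constant $c$), this ratio exceeds $c+1$, giving
\[
B_{n-t}-\tilde{B}_{n-t}-\tilde{B}_{n-t-1}>c\,B_{n-t-1},
\]
as desired.

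There is no real obstacle here; the lemma is essentially a packaging of the growth statement (\ref{inequal}) that has already been proved, and the only care needed is to keep track of the index shift so that the limit is applied with $t$ treated as a constant. The threshold $n_0(t)$ depends on both $t$ and $c$, but $c$ is a fixed constant so this dependency is absorbed into $n_0(t)$.
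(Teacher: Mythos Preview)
Your argument is correct and is essentially identical to the paper's own proof, which appears in the paragraph just before the lemma: the paper also rewrites the right-hand side via $B_m=\tilde B_m+\tilde B_{m+1}$ as $\tilde B_{n-t+1}-\tilde B_{n-t-1}$, bounds $\tilde B_{n-t-1}\le B_{n-t-1}$, and then invokes (\ref{inequal}) at the shifted index. The only cosmetic difference is that the paper first carries out the computation at a generic index and then shifts, whereas you work directly with $n-t$.
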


\begin{lm}\label{less02}
If $t+4 \le r \le n-2$ and $n\geq n_0(t)$, then
\[ tB_{n-r+1} < \tilde{B}_{n-t-1}. \]
\end{lm}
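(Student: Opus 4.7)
The plan is to reduce Lemma \ref{less02} to the already-noted asymptotic (\ref{inequal}) via a single monotonicity estimate. Since Bell numbers are increasing, the hypothesis $r\geq t+4$ gives $n-r+1 \leq n-t-3$, so $B_{n-r+1} \leq B_{n-t-3}$. It therefore suffices to prove the single inequality
\[
t\,B_{n-t-3} < \tilde{B}_{n-t-1}
\]
for all $n$ large enough in terms of $t$.

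For this, I would apply (\ref{inequal}) with $n$ replaced by $n-t-2$: the limit
\[
\lim_{n\to\infty} \frac{\tilde{B}_{n-t-1}}{B_{n-t-3}} = \infty
\]
follows immediately, since $t$ is fixed and the shift is just a change of indexing variable. In particular, there exists $n_0(t)$ such that for every $n \geq n_0(t)$ the ratio on the left exceeds $t$, i.e.\ $\tilde{B}_{n-t-1} > t\,B_{n-t-3}$. Chaining with the monotonicity estimate yields
\[
t\,B_{n-r+1} \;\leq\; t\,B_{n-t-3} \;<\; \tilde{B}_{n-t-1},
\]
as required. There is really no obstacle here beyond verifying the index shift; all the analytic work is already packaged into (\ref{inequal}). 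The argument is uniform in $r$ because the bound $B_{n-r+1}\leq B_{n-t-3}$ holds for the entire range $t+4\leq r\leq n-2$, so a single choice of $n_0(t)$ works for every admissible $r$ simultaneously.
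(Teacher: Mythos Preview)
Your proof is correct and is essentially identical to the paper's own argument: the paper simply notes that $B_{n-r+1}\leq B_{n-t-3}$ (from $r\geq t+4$) and then invokes (\ref{inequal}), exactly as you do. You have merely spelled out the index shift and the uniformity in $r$ more explicitly.
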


\begin{lm}\label{less03} For  $n\geq n_0(t)$,
\[ \tilde{B}_{n-t-1}> \sum_{k=\left \lfloor \frac{n}{t+1}+t-1 \right \rfloor + 1}^{n} {n \choose k}\tilde{B}_{n-k}. \]
\end{lm}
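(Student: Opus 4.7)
The plan is to reindex by $j=n-k$, which converts the right-hand side into
\[
S:=\sum_{j=0}^{J}\binom{n}{j}\tilde{B}_j,\qquad J:=n-\left\lfloor\frac{n}{t+1}+t-1\right\rfloor-1,
\]
so that $J\ge tn/(t+1)-t$ and in particular $J\to\infty$ with $n$. First I would bound $\tilde{B}_j\le B_j\le B_J$ using the monotonicity of the Bell sequence to get $S\le 2^n B_J$, then use $B_J=\tilde{B}_J+\tilde{B}_{J+1}\le 2\tilde{B}_{J+1}$ (the second inequality being a consequence of $\tilde{B}_J\le \tilde{B}_{J+1}$, itself implied by the ratio estimate derived below for $n$ large enough) to reduce the statement to
\[
\tilde{B}_{n-t-1}>2^{n+1}\,\tilde{B}_{J+1}.
\]

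Next I would write this ratio as a telescoping product
\[
\frac{\tilde{B}_{n-t-1}}{\tilde{B}_{J+1}}=\prod_{m=J+1}^{n-t-2}\frac{\tilde{B}_{m+1}}{\tilde{B}_m},
\]
whose number of factors equals $K-t-1\ge n/(t+1)-3$, where $K=\lfloor n/(t+1)+t-1\rfloor$. The key input is that each factor tends to infinity. From (\ref{ttt}) one has $\tilde{B}_{m+1}=B_m-\tilde{B}_m$ and $B_{m-1}\ge\tilde{B}_m$, whence
\[
\frac{\tilde{B}_{m+1}}{\tilde{B}_m}=\frac{B_m}{\tilde{B}_m}-1\ge\frac{B_m}{B_{m-1}}-1\longrightarrow\infty
\]
by Klazar's estimate $B_m/B_{m-1}\to\infty$.

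Choosing $n_0(t)$ so large that every factor in the product exceeds $2^{2(t+1)}$ (possible since $J+1\to\infty$ with $n$) and that $2n-6(t+1)\ge n+2$, we obtain, for $n\ge n_0(t)$, the bound
\[
\frac{\tilde{B}_{n-t-1}}{\tilde{B}_{J+1}}\ge 2^{\,2(t+1)\bigl(n/(t+1)-3\bigr)}=2^{\,2n-6(t+1)}>2^{n+1},
\]
which delivers the desired inequality. The only nontrivial ingredient is the asymptotic $\tilde{B}_{m+1}/\tilde{B}_m\to\infty$, which is immediate from the relations already in (\ref{ttt}) together with Klazar's theorem; the remainder is routine bookkeeping on factor counts and exponents, and I do not anticipate any substantive obstacle.
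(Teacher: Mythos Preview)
Your argument is correct and follows essentially the same route as the paper: bound the sum by $2^n$ times a single $\tilde B$-value, then beat the $2^n$ by a telescoping product of $\tilde B$-ratios, each of which can be made arbitrarily large. The only cosmetic differences are that the paper telescopes with the step-two ratios $\tilde B_m/\tilde B_{m-2}$ (whose growth follows directly from identity~(\ref{e2}), since $\tilde B_m\ge (m-1)\tilde B_{m-2}$, so Klazar's estimate is not actually needed), whereas you use consecutive ratios $\tilde B_{m+1}/\tilde B_m$ and invoke Klazar via~(\ref{ttt}); and you pass through $B_J$ rather than bounding $\tilde B_{n-k}$ directly. Both variants yield the same conclusion with the same amount of work.
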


\begin{proof} By  (\ref{e2}), 
\begin{align}
\sum_{k=\left \lfloor \frac{n}{t+1}+t-1 \right \rfloor + 1}^{n} {n \choose k}\tilde{B}_{n-k} & \leq \tilde{B}_{n-\left \lfloor \frac{n}{t+1}+t-1 \right \rfloor+1} \sum_{k=\lfloor n/(t+1)+t-1 \rfloor + 1}^{n} {n \choose k}\notag\\
&\leq  2^n\tilde{B}_{n-\left \lfloor \frac{n}{t+1}+t-1 \right \rfloor+1}.\notag
\end{align}
 So it is sufficient to show that $\tilde{B}_{n-t-1}/\tilde{B}_{n-\left \lfloor \frac{n}{t+1}+t-1 \right \rfloor+1}>2^n$.

Again by (\ref{e2}), for any fixed $r$, $\tilde{B}_{m}/\tilde{B}_{m-2}>r$ for sufficiently large $m$. Therefore
\begin{align}
\frac{\tilde{B}_{n-t-1}}{\tilde{B}_{n-\left \lfloor \frac{n}{t+1}+t-1 \right \rfloor+1}}&\geq \left( \frac{\tilde{B}_{n-\left \lfloor \frac{n}{t+1}+t-1 \right \rfloor+2u-1}}{\tilde{B}_{n-\left \lfloor \frac{n}{t+1}+t-1 \right \rfloor+2u-3}}\right)\cdots \left( \frac{\tilde{B}_{n-\left \lfloor \frac{n}{t+1}+t-1 \right \rfloor+5}}{\tilde{B}_{n-\left \lfloor \frac{n}{t+1}+t-1 \right \rfloor+3}}\right)\left( \frac{\tilde{B}_{n-\left \lfloor \frac{n}{t+1}+t-1 \right \rfloor+3}}{\tilde{B}_{n-\left \lfloor \frac{n}{t+1}+t-1 \right \rfloor+1}}\right)\notag\\
& > r^{u-1},\notag
\end{align}
where $u=\lfloor \frac{1}{2}(\lfloor \frac{n}{t+1}+t-1  \rfloor-t-2)\rfloor$. Clearly $u-1\geq \frac{n}{4(t+1)}$. So if we choose $r=2^{4(t+1)}$, then for sufficiently large $n$, the lemma follows.
\end{proof}

\begin{lm}\label{Pre_Case_1} Let $\mathcal{A}$ be a non-trivial $t$-intersecting family of set
partitions of $[n]$ of maximum size. Suppose for all $i, j
\in [n]$ such that $S_{ij}(\A)\neq \A$, $S_{ij}(\A)$ is trivially $t$-intersecting. If $S_{ab}(\A)\neq \A$ for some $a, b
\in [n]$, then for $n\geq n_0(t)$, we have
\[ \A = \A_{1} \cup \A_{2},  \]
and either \textnormal{(\ref{poss1})} or \textnormal{(\ref{poss2})} holds:
\begin{align}
\A_{1}  &\subseteq \{C \in \B(n): \{a\}, \{b\}\in
C\},\notag\\
 \emptyset \not = \A_{2} & \subseteq \{C \in \B(n):
\{a, b\}\in C\},\label{poss1}\\
\A &\subseteq \{C \in \B(n): \{y_3\}, \dots, \{y_t\} \in
C\},\notag
\end{align}
for some fixed $y_3,\dots, y_t\in [n]\setminus \{a,b\}$, or 
\begin{align}
\A_{1}  &\subseteq \{C \in \B(n): \{a\} \in
C\},\notag\\
 \emptyset \not = \A_{2} & \subseteq \{C \in \B(n):
\{a, b\}\in C\},\label{poss2}\\
\A &\subseteq \{C \in \B(n): \{x_2\}, \dots, \{x_t\} \in
C\},\notag
\end{align}
for some fixed $x_2,\dots, x_t\in [n]\setminus \{a,b\}$. Here, $y_{3}$, $\ldots$, $y_{t}$ only exist if $t \ge 3$ and  $x_{2}$, $\ldots$, $x_{t}$ only exist if $t \ge 2$.
\end{lm}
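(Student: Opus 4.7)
The plan is to feed the only non-trivial splitting $S_{ab}(\A)$ into the hypothesis: write $T = \{z_1,\dots,z_t\}$ for the fixed $t$-set of common singletons of $S_{ab}(\A)$. For each $P \in \A$, either $P \in S_{ab}(\A)$ directly (when $P \notin \A_{ab}$) or $s_{ab}(P) \in S_{ab}(\A)$ (when $P \in \A_{ab}$); in the second case the only singletons newly created by $s_{ab}$ are $\{a\}$ (always) and $\{b\}$ (exactly when $P_{[a]} = \{a,b\}$). I then case-split on $T \cap \{a,b\}$.

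If $T \cap \{a,b\} = \emptyset$, every $z \in T$ must already be a singleton of every $P \in \A$, so $\A$ is trivially $t$-intersecting, contradicting the hypothesis. If $\{a,b\} \subseteq T$, writing $T = \{a,b,y_3,\dots,y_t\}$, the requirement $\{b\} \in \sigma(s_{ab}(P))$ for $P \in \A_{ab}$ forces $P_{[a]} = \{a,b\}$, and the $y_i$ (distinct from $a,b$) must already be singletons of $P$; setting $\A_1 := \A \setminus \A_{ab}$ and $\A_2 := \A_{ab}$ yields (\ref{poss1}). If $b \in T$ but $a \notin T$, writing $T = \{b,x_2,\dots,x_t\}$, the same reasoning (the new singleton $\{b\}$ can only arise from $P_{[a]} = \{a,b\}$) still gives $P_{[a]} = \{a,b\}$ for every $P \in \A_{ab}$, and after swapping the roles of $a$ and $b$ in the conclusion we obtain (\ref{poss2}).

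The delicate case is $a \in T$, $b \notin T$. Writing $T = \{a,x_2,\dots,x_t\}$, the analysis gives $\A_1 := \A \setminus \A_{ab} \subseteq \{C : \{a\},\{x_2\},\dots,\{x_t\} \in C\}$, and every $P \in \A_2 := \A_{ab}$ satisfies $\{x_2\},\dots,\{x_t\} \in P$ and $b \in P_{[a]}$; but here we do \emph{not} automatically get $P_{[a]} = \{a,b\}$. Suppose otherwise and choose $c \in P_{[a]} \setminus \{a,b\}$. A direct verification shows $s_{bc}(P) \notin \A$: the $a$-block of $s_{bc}(P)$ equals $P_{[a]} \setminus \{b\}$, which has size at least $2$ and no longer contains $b$, so $\{a\} \notin \sigma(s_{bc}(P))$ and $b \notin (s_{bc}(P))_{[a]}$, meaning $s_{bc}(P)$ fits neither the $\A_1$-profile nor the $\A_{ab}$-profile. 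Hence $S_{bc}(\A) \neq \A$, so the hypothesis supplies a second trivially $t$-intersecting structure for $S_{bc}(\A)$ with fixed singleton set $T''$. Comparing $\sigma(s_{bc}(P))$ (which contains $b$ and the $x_i$ but not $a$) with $\sigma(R)$ for witnesses $R \in \A_1 \cap (\A \setminus \A_{bc})$ (which contain $a$ and the $x_i$ but not $b$) then pins down $T''$ and produces the contradiction.

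The main obstacle is this last step: producing enough witnesses $R$ so that $\bigcap_R \sigma(R)$ is exactly $\{a,x_2,\dots,x_t\}$, thereby ruling out every $t$-set $T''$ compatible with $\sigma(s_{bc}(P))$. Such an abundance follows from the maximum-size assumption on $\A$, combined with the asymptotic estimates of Lemmas~\ref{less}--\ref{less03}, which force $\A_1$ to be large and diverse in its singleton structure; any failure of abundance would allow one to enlarge $\A$ while keeping it non-trivial and $t$-intersecting, contradicting maximality.
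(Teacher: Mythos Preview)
Your case split on $T\cap\{a,b\}$ is the right organisation and matches the paper; the cases $T\cap\{a,b\}=\emptyset$, $\{a,b\}\subseteq T$, and $b\in T,\ a\notin T$ are handled correctly (in the last, your observation that $\{b\}\in\sigma(s_{ab}(P))$ forces $P_{[a]}=\{a,b\}$ is in fact a bit cleaner than the paper's joint treatment of (b)/(c)).

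The genuine gap is in the delicate case $a\in T,\ b\notin T$. Your plan to pass to $S_{bc}(\A)$ is sound, and you correctly get $s_{bc}(P)\notin\A$ and $a\notin T''$. But the final step---``produce enough witnesses $R\in\A_1\cap(\A\setminus\A_{bc})$ with $\bigcap_R\sigma(R)=\{a,x_2,\dots,x_t\}$, by maximality/enlargement''---does not work as written. You cannot enlarge $\A$ by adding a partition with singletons $\{a\},\{x_i\}$ but missing some extra $\{z\}$: such a partition need not $t$-intersect the members of $\A_2$, so the enlargement argument is circular. What actually closes the case is structural, not an abundance count. Since $T''\setminus\{b,c\}$ must consist of common singletons of \emph{all} of $\A$ (splitting by $s_{bc}$ can only create $\{b\}$ or $\{c\}$), non-triviality gives $T''\cap\{b,c\}\neq\emptyset$; and $c\notin\sigma(s_{bc}(P))$ (because $c$ stays in the block $P_{[a]}\setminus\{b\}$ of size $\ge 2$), so $c\notin T''$ and hence $b\in T''$. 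Then every member of $\A\setminus\A_{bc}$ contains $\{b\}$, which forces $\A_2\subseteq\A_{bc}$ (so $a,b,c$ lie in a common block for every $Q\in\A_2$) and forces each $R\in\A_1$ to have $\{b\}\in R$ or $c\in R_{[b]}$; counting gives $|\A|\le 3B_{n-t-1}$, contradicting maximality via Lemma~\ref{less}.

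For comparison, the paper avoids this witness issue by a different device: it first uses two members $P_0,Q\in\A_2$ and the splittings $s_{ad},s_{bd}$ to show that \emph{all} members of $\A_2$ share the same $a$-block $\{a,b\}\cup X_1$; then, if $X_1\neq\emptyset$, it applies $S_{ba}$ (not $S_{bc}$) and uses $s_{ba}(P_0)$ to see that the fixed $t$-set of $S_{ba}(\A)$ must contain $b$, whence $\A_1\subseteq\{C:\{a\},\{b\},\{x_i\}\in C\}$ and $|\A|\le 2B_{n-t-1}$. Either route works once made precise, but your proposal as it stands leaves the key step unproved.
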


\begin{proof} By assumption, $S_{ab}(\A)$ is trivially $t$-intersecting. 
This means that either 
\begin{itemize}
\item[(a)] $\{a\}, \{b\}, \{y_3\},\dots, \{y_t\}\in P$ for all $P\in S_{ab}(\A)$, or
\item[(b)] $\{a\}, \{x_2\}, \dots, \{x_t\}\in P$ for all $P\in S_{ab}(\A)$, or
\item[(c)] $\{b\}, \{x_2\}, \dots, \{x_t\}\in P$ for all $P\in S_{ab}(\A)$.
\end{itemize}

Suppose (a) holds. Since $\A$ is non-trivially $t$-intersecting, we conclude that (5) holds.

Suppose (b) or (c) holds. Since $\A$ is non-trivially $t$-intersecting, there is a $P_0\in \A$ such that $s_{ab}(P_0)\notin \A$  and $P_0=\{\{a,b\}\cup X_1\}\cup \{ \{x_l\}\ : \ 2\leq l\leq t\}\cup B_1$ where $X_1\subseteq [n]\setminus \{a,b, x_2,\dots, x_t\}$ and $B_1$ is a set partition of $[n]\setminus (\{a,b, x_2,\dots, x_t\} \cup X_1)$ (we allow $X_1=\varnothing$). 

Suppose there is a $Q\in \A$ such that $Q=\{\{a,b\} \cup X_2\}\cup \{ \{x_l\}\ : \ 2\leq l\leq t\}\cup B_2$ where $X_2\subseteq [n]\setminus \{a,b,x_2,\dots, x_t\}$ and $B_2$ is a set partition of $[n]\setminus (\{a,b, x_2,\dots, x_t\} \cup X_2)$. Suppose $X_2\nsubseteq  X_1$. Let $d\in X_2\setminus X_1$. If $s_{ad}(Q), s_{bd}(Q)\in \A$, then $s_{ab}(s_{bd}(Q))=s_{bd}(Q)$ and $s_{ab}(s_{ad}(Q))=s_{ad}(Q)$. But this contradicts (b) and (c), as $\{a\}$ is not a block in $s_{bd}(Q)$ and $\{b\}$ is not a block in $s_{ad}(Q)$.
So we may assume $s_{bd}(Q)\notin \A$. 
Since $s_{bd}(P_0)=P_0$, we see that $S_{bd}(\A)$ is non-trivially $t$-intersecting and $S_{bd}(\A)\neq \A$, a contradiction. So we may assume $X_2\subseteq X_1$. 
 If there is a $c\in X_1\setminus X_2$, then $s_{ac}(P_0)=s_{ab}(P_0)$, $s_{ac}(Q)=Q$, and thus $S_{ac}(\A)$ is non-trivially $t$-intersecting and $S_{ac}(\A)\neq \A$, a contradiction.  Therefore we may assume that
\[ \A = \A_{1} \cup \A_{2},  \]
where 
\begin{align}
\A_{1}  &\subseteq \{C \in \B(n): \{a\} \in
C\},\notag\\
 \emptyset \not = \A_{2} & \subseteq \{C \in \B(n):
\{a, b\} \cup  X_1\in C\},\notag\\
\A &\subseteq \{C \in \B(n): \{x_2\}, \dots, \{x_t\} \in
C\}.\notag
\end{align}

Suppose $X_1\neq\varnothing$. This implies that (b) holds. Note that $s_{ba}(P_0)\notin \A$, for otherwise $s_{ab}(s_{ba}(P_0))=s_{ba}(P_0)\in S_{ab}(\A)$ and it does not contain the singleton $\{a\}$. Now $S_{ba}(\A)\neq \A$ implies that $S_{ba}(\A)$ is trivially $t$-intersecting (by assumption). Furthermore every element in $S_{ba}(\A)$ contains the singleton $\{b\}$. Since $S_{ba}(\A_1)=\A_1$, we must have $\A_{1} \subseteq \{C \in \B(n): \{a\}, \{b\}, \{x_2\},\dots, \{x_t\}\in C\}$. Therefore $\vert \A_1\vert\leq B_{n-t-1}$, $\vert \A_2\vert\leq B_{n-t-1}$ and
$\vert \A\vert\leq 2B_{n-t-1}< B_{n-t}-\tilde{B}_{n-t}-\tilde{B}_{n-t-1}$ (Lemma \ref{less}), a contradiction, as $\mathcal{A}$ is a non-trivial $t$-intersecting family of maximum size. 
\end{proof}

\begin{thm}\label{Case_1} Let $\mathcal{A}$ be a non-trivial $t$-intersecting family of set
partitions of $[n]$ of maximum size. If $\mathcal{A}$ is not compressed, then  for
$n\geq n_0(t)$, there exist $k, l
\in [n]$ such that $S_{kl}(\A)\neq \A$ and  $S_{kl}(\A)$ is non-trivially $t$-intersecting.
\end{thm}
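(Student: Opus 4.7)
I would argue by contradiction. Assume that for every pair $i,j\in[n]$ with $S_{ij}(\A)\neq\A$, the family $S_{ij}(\A)$ is trivially $t$-intersecting. Since $\A$ is not compressed, one can pick $a,b\in[n]$ with $S_{ab}(\A)\neq\A$ and invoke Lemma \ref{Pre_Case_1} to obtain a decomposition $\A=\A_1\cup\A_2$ of type (\ref{poss1}) or (\ref{poss2}). The goal is to bound $|\A|$ strictly below the Hilton--Milner value $B_{n-t}-\tilde{B}_{n-t}-\tilde{B}_{n-t-1}+t$, which would contradict the assumed maximality of $|\A|$.

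In either case, every element of $\A_2$ shares the block $\{a,b\}$ together with the remaining fixed singletons. Deleting these common blocks produces a reduced family on at most $n-t$ points which is either $1$-intersecting (in case (\ref{poss1})) or unrestricted (in case (\ref{poss2})); Theorem \ref{Ku-Renshaw} then yields $|\A_2|\le B_{n-t-1}$ in both cases. For $|\A_1|$, I would fix any $P^{\ast}\in\A_2$ and write its non-fixed blocks as $C_1,\ldots,C_r$. Since the singletons $\{a\}$ and $\{b\}$ appearing in any $P_1\in\A_1$ do not match the block $\{a,b\}$ of $P^{\ast}$, the $t$ required common blocks between $P_1$ and $P^{\ast}$ must include at least one (case (\ref{poss2})) or at least two (case (\ref{poss1})) of the $C_i$. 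Enumerating over which blocks of $P^{\ast}$ are shared yields
\begin{equation*}
|\A_1|\ \le\ \sum_{i=1}^{r} B_{(n-t)-|C_i|} \qquad \text{or} \qquad |\A_1|\ \le\ \sum_{1\le i<j\le r} B_{(n-t)-|C_i|-|C_j|},
\end{equation*}
respectively.

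To turn these into the required inequality, I would split by the largest block size of $P^{\ast}$, choosing $P^{\ast}\in\A_2$ so as to maximise it. If some $|C_i|\ge t+4$, then Lemma \ref{less02} bounds each summand by a small multiple of $\tilde{B}_{n-t-1}/t$, giving $|\A_1|\le c\,\tilde{B}_{n-t-1}$ for some constant $c=c(t)$. Otherwise every element of $\A_2$ has all non-fixed blocks of size at most $t+3$, so any such $P^{\ast}$ has at least $\lfloor n/(t+1)+t-1\rfloor+1$ singletons for $n\ge n_0(t)$. Expanding $B_{(n-t)-|C_i|}$ via identity (\ref{e1}) and applying the sharp tail estimate of Lemma \ref{less03} again bounds $|\A_1|$ by $c\,\tilde{B}_{n-t-1}$. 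Combining with $|\A_2|\le B_{n-t-1}$ and Lemma \ref{less} yields $|\A|<B_{n-t}-\tilde{B}_{n-t}-\tilde{B}_{n-t-1}+t$ for $n\ge n_0(t)$, the desired contradiction.

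The main obstacle is this second subcase, where every element of $\A_2$ has only small non-fixed blocks: the crude bound $|\A_1|\le rB_{n-t-1}$ is of order $nB_{n-t-1}$ and vastly exceeds the target. The key is to exploit the many-singleton structure that is forced on $P^{\ast}$, via identity (\ref{e1}) and Lemma \ref{less03}, to replace $B_{n-t-1}$ by the much smaller $\tilde{B}_{n-t-1}$ in the bound for $|\A_1|$.
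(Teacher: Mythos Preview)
Your approach has a genuine gap in both subcases of the dichotomy on $\max_i |C_i|$.

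In the first subcase (some $|C_i|\ge t+4$), Lemma \ref{less02} only controls the summand $B_{(n-t)-|C_i|}$ for that particular large block; the remaining summands, corresponding to blocks $C_j$ that may well be singletons, are each $B_{n-t-1}$ and are not touched. In the second subcase, the implication ``all non-fixed blocks of $P^*$ have size $\le t+3$ $\Rightarrow$ $P^*$ has at least $\lfloor n/(t+1)+t-1\rfloor+1$ singletons'' is simply false: for instance $P^*$ could consist (apart from $\{a,b\}$ and the fixed singletons) entirely of blocks of size~$2$, with no extra singletons at all. In either subcase your bound $\sum_i B_{(n-t)-|C_i|}$ can be of order $nB_{n-t-1}$, and neither identity~(\ref{e1}) nor Lemma~\ref{less03} rescues this: the leading term of $B_{n-t-1}$ is $\tilde B_{n-t}$, not $\tilde B_{n-t-1}$, so you cannot trade $B_{n-t-1}$ for $\tilde B_{n-t-1}$ by a tail estimate. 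The underlying problem is that the common blocks between $P_1\in\A_1$ and $P^*\in\A_2$ may be singletons of $P^*$, and $P^*$ may have many singletons.

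The paper's proof avoids this by exploiting the contradictory hypothesis far more aggressively than merely through the decomposition of Lemma~\ref{Pre_Case_1}. It first shows that if \emph{any} pair $(k,l)$ other than $(a,b),(b,a)$ has $S_{kl}(\A)\neq\A$, then applying Lemma~\ref{Pre_Case_1} a second time yields a refined decomposition into at most four pieces each of size $\le B_{n-t-1}$, which is already too small by Lemma~\ref{less}. Otherwise $\A$ is compressed with respect to every pair except $(a,b),(b,a)$. This ``almost compressed'' property is the crucial missing ingredient: it lets one take any common non-singleton block $C$ of $P\in\A_1$ and $R\in\A_2$ and split it apart inside $\A$ (via $s_{wz}$ with $w,z\in C$), producing a new $R^*\in\A$ that no longer shares $C$ with $P$. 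Iterating forces the $t$-intersection between $\A_1$ and $\A_2$ to be realised \emph{entirely at the level of singletons}, giving the cross-intersecting bounds (\ref{e4}) and (\ref{ee4}) on $\sigma(P)\cap\sigma(R)$. From these one reads off $|\sigma(P)|\ge t+2$ (resp.\ $t+1$) for every $P$, which immediately bounds $|\A|$ below the Hilton--Milner value. Your argument uses the hypothesis only once, through a single $P^*$, and this is not enough.
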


\begin{proof} Assume, for a contradiction, that for all $i, j
\in [n]$ such that $S_{ij}(\A)\neq \A$, $S_{ij}(\A)$ is trivially $t$-intersecting. 

Since $\mathcal{A}$ is not compressed, there exist $a, b
\in [n]$ with $S_{ab}(\A)\neq \A$. By Lemma \ref{Pre_Case_1}, $\A = \A_{1} \cup \A_{2}$, and
either  \textnormal{(\ref{poss1})} or \textnormal{(\ref{poss2})} holds. 
Note that in either case $S_{aj}(\A)=\A$ and $S_{ja}(\A)=\A$ for all $j\in [n]\setminus \{a,b\}$. Note also that $S_{ab}(\A)=S_{ba}(\A)$. 

We have two cases. 

\noindent
{\bf Case 1.} Suppose (\ref{poss1}) holds. Then $S_{bj}(\A)=\A$ and $S_{jb}(\A)=\A$ for all $j\in [n]\setminus \{a,b\}$. Suppose there exist $k,l \in [n]$ with $S_{kl}(\A)\neq \A$ and $k,l\neq a,b$. Again by Lemma \ref{Pre_Case_1}, $\A = \A_{3} \cup \A_{4}$ where 
$\A_{3} \subseteq \{C \in \B(n): \{k\} \in
C\}$ and $\A_{4} \subseteq \{C \in \B(n): \{k, l\}\in C\}$.
Therefore
\[ \A = \D_{1} \cup \D_{2}\cup \D_3\cup \D_4,  \]
where 
\begin{align}
 \D_{1}  &\subseteq \{C \in \B(n): \{a\}, \{b\}, \{k\} \in
C\}, \notag\\
\D_{2} & \subseteq \{C \in \B(n): \{a\}, \{b\}, \{k,l\} \in
C\}, \notag\\
 \D_{3} & \subseteq \{C \in \B(n): \{a,b\}, \{k\} \in
C\}, \notag\\
 \D_{4} & \subseteq \{C \in \B(n): \{a,b\}, \{k,l\} \in
C\}. \notag
\end{align}
Now $k\neq y_3,\dots, y_t$, for $S_{kl}(\A)\neq \A$. Therefore $\vert \A\vert\leq 4B_{n-t-1}< B_{n-t}-\tilde{B}_{n-t}-\tilde{B}_{n-t-1}$ (Lemma \ref{less}), a contradiction, as $\mathcal{A}$ is a maximum size non-trivial $t$-intersecting family. 

So we may assume that $S_{ij}(\A)=\A$ for all $i, j
\in [n]$ with $(i,j)\neq (a,b), (b,a)$. We first show that the interesting property of
$\A$ can be partially transferred to the family
$\sigma(\A)$ of sets which are union of singletons. In
particular, we show the following {\em cross-intersecting}
property:
\begin{eqnarray}
\vert \sigma(P) \cap \sigma(R) \cap [n] \setminus \{a,b, y_3,\dots, y_t\} \vert\geq 2, ~~~\forall P \in \A_{1}, R \in \A_{2}.
\label{e4}
\end{eqnarray}
Assume for a contradiction that there exist $P \in \A_{1}$ and
$R \in \A_{2}$ such that 
\begin{equation}
\vert \sigma(P) \cap \sigma(R) \cap [n]
\setminus \{a,b, y_3,\dots, y_t\}\vert \leq 1.\notag
\end{equation}
Since $P$ contains $\{a\}, \{b\}, \{y_3\},\dots, \{y_t\}$ and $R$ contains $\{a,b\}, \{y_3\},\dots, \{y_t\}$, we conclude that $P$ and $R$ must have at least one block of size at least $2$ in common.
Suppose there are $s \ge 1$ such common blocks of $P$ and $R$, say
$C_{1}$, $\ldots$, $C_{s}$, which are disjoint from $\sigma(P)
\cup \sigma(R) \cup \{a,b,y_3,\dots, y_t\}$. Fix two distinct points $w_{i}$,
$z_{i}$ from each block $C_{i}$. Then, since $S_{ij}(\A) = \A$ for all $i, j \in [n]$ with $(i,j)\neq (a,b), (b,a)$, we have
\[ R^{*} = s_{w_{s},z_{s}}( \cdots (s_{w_{1},z_{1}}(R)) \cdots )
\in \A. \]
However, $\vert P\cap R^{*}\vert \leq t-1$,
contradicting the $t$-intersecting property of $\A$. This
proves (\ref{e4}). 

Note that $\{a\}, \{b\}, \{y_3\}, \dots, \{y_t\}\in P$ for all $P\in S_{ab}(\A)$. This implies that $s_{ab}(P)\notin \A$ for all $P\in \A_2$.  Furthermore if $P\in \A_2$, then by (\ref{e4}), $\vert \sigma(P)\cap [n]\setminus \{a,b, y_3,\dots, y_t\}\vert \geq 2$. Therefore $\vert \sigma(s_{ab}(P))\vert \geq t+2$ for all $P\in \A_2$. Similarly, $\vert \sigma(P)\vert \geq t+2$ for all $P\in \A_1$. 
Therefore $\vert \A\vert\leq  B_{n-t}-\tilde{B}_{n-t}-(n-t)\tilde{B}_{n-t-1}<B_{n-t}-\tilde{B}_{n-t}-\tilde{B}_{n-t-1}$, a contradiction, as $\mathcal{A}$ is a maximum size non-trivial $t$-intersecting family. 

\noindent
{\bf Case 2.} Suppose (\ref{poss2}) holds. Suppose there exist $k,l \in [n]$ with $S_{kl}(\A)\neq \A$, $k\neq a$, and $(k,l)\neq (b,a)$. Again by Lemma \ref{Pre_Case_1}, we deduce that
\[ \A = \D_{5} \cup \D_{6}\cup \A_2,  \]
where 
\begin{align}
 \D_{5}  &\subseteq \{C \in \B(n): \{a\},  \{k\} \in
C\}, \notag\\
\D_{6} & \subseteq \{C \in \B(n): \{a\},  \{k,l\} \in
C\}. \notag
\end{align}
Now $k\neq x_2,\dots, x_t$, for $S_{kl}(\A)\neq \A$. Therefore $\vert \A\vert\leq 3B_{n-t-1}< B_{n-t}-\tilde{B}_{n-t}-\tilde{B}_{n-t-1}$ (Lemma \ref{less}), a contradiction.

So we may assume that $S_{ij}(\A)=\A$ for all $i, j
\in [n]$ with $(i,j)\neq (a,b), (b,a)$. As in the proof of (\ref{e4}), we can show that the following cross-intersecting
property holds:
\begin{eqnarray}
\vert \sigma(P) \cap \sigma(R) \cap [n] \setminus \{a,b,x_2,\dots, x_t\} \vert\geq 
1, ~~~\forall P \in \A_{1}, R \in \A_{2}.
\label{ee4}
\end{eqnarray}

Suppose $\A_2$ contains a $P_1$ with $\vert \sigma(P_1)\vert=t$.  Let $\sigma(P_1)=\{x_2,\dots, x_t, y\}$. Note that $y\in [n]\setminus \{a,b, x_2,\dots, x_t\}$. By  (\ref{ee4}), every element in $\A_1$ contains the singletons $\{a\}$, $\{x_2\}$, $\dots$, $\{x_t\}$, and $\{y\}$. Therefore $\vert \A_1\vert\leq B_{n-t-1}$, $\vert \A_2\vert\leq B_{n-t-1}$ and $\vert \A\vert\leq 2B_{n-t-1}<B_{n-t}-\tilde{B}_{n-t}-\tilde{B}_{n-t-1}$ (Lemma \ref{less}), a contradiction. So we may assume that $\A_2$ does not contain any $P$ with $\vert \sigma(P)\vert=t$.

Note that by (\ref{ee4}), $\vert\sigma(P)\vert\geq t+1$ for all $P\in\A_1$. So there are two subcases to be considered.

\noindent
{\bf Subcase 2.1.} Suppose $\A_1$ contains a $P$ with $\vert\sigma(P)\vert=t+1$. Let $P_1,\dots, P_r$ be the only  elements in $\sigma(\A)$ with $\vert\sigma(P_i)\vert=t+1$. Let $\sigma(P_i)=\{a,x_2,\dots, x_t,z_i\}$. Note that by (\ref{ee4}), $z_i\neq b$. Furthermore $r\leq n-t-1$. If $r=n-t-1$, then by (\ref{ee4}), $\A_2=\{Q(a,b)\}$. 
If $t=1$, then we conclude that $\A=\mathcal{H}(a,b)$, as $\A$ is a non-trivial $1$-intersecting family of maximum size, but this contradicts that $\A$ is not compressed. If $t>1$, then $Q(x_2,b)\notin \A$ since $Q(x_2,b)$ is not of the form given in (\ref{poss2}). But $\A\cup \{Q(x_2,b)\}$ is $t$-intersecting,  contradicting the fact that $\A$ is a non-trivial $t$-intersecting family of maximum size. Similarly, $r\neq n-t-2$. So $r\leq n-t-3$. 

Note that if $P\in\A_1$, then  $\sigma(P)\neq \{a,x_2,\dots, x_t\}$ and $\sigma(P)\neq \{a, x_2,\dots, x_t, v\}$ for $v\in [n]\setminus \{a,b,x_2,\dots, x_t, z_1,\dots, z_r\}$. So 
\[ \vert\A_1\vert\leq B_{n-t}-\tilde{B}_{n-t}-(n-t-r-1)\tilde{B}_{n-t-1}.  \]
Now if $P\in\A_2$, then by (\ref{ee4}), $\sigma(P) \supseteq \{x_2,\dots, x_t,z_1,\dots, z_r\}$. So $\vert \A_2\vert\leq B_{n-1-t-r}$.
Assume for the moment that $r\geq 2$. Then $\vert \A_2\vert\leq B_{n-t-3}<\tilde{B}_{n-t-1}$ (by (\ref{inequal})), and
\begin{align}
 \vert\A\vert &\leq B_{n-t}-\tilde{B}_{n-t}-(n-t-r-2)\tilde{B}_{n-t-1}\notag\\
 & \leq B_{n-t}-\tilde{B}_{n-t}-(n-t-2)\tilde{B}_{n-t-1}+(n-t-3)\tilde{B}_{n-t-1}\notag\\
 & = B_{n-t}-\tilde{B}_{n-t}-\tilde{B}_{n-t-1},\notag
\end{align}
a contradiction. 

Suppose $r=1$. Then by (\ref{ee4}), every element in $\A_2$ contains the singletons $\{x_2\}$, $\dots$, $\{x_t\}$, $\{z_1\}$. Since $\A_2$ does not contain any $P$ with $\vert\sigma(P)\vert=t$, we have
 $\vert \A_2\vert\leq B_{n-t-2}-\tilde{B}_{n-t-2}=\tilde{B}_{n-t-1}$ (by (\ref{ttt})), and 
\begin{align}
 \vert\A\vert &\leq B_{n-t}-\tilde{B}_{n-t}-(n-t-2)\tilde{B}_{n-t-1}+\tilde {B}_{n-t-1}\notag\\
  & < B_{n-t}-\tilde{B}_{n-t}-\tilde{B}_{n-t-1},\notag
\end{align}
a contradiction.

\noindent
{\bf Subcase 2.2.} Suppose $\vert\sigma (P)\vert\geq t+2$ for all $P\in\A_1$. Then
\begin{equation}
\vert \A_1\vert\leq B_{n-t}-\tilde{B}_{n-t}-(n-t)\tilde{B}_{n-t-1}.\notag
\end{equation}

By (\ref{ee4}), every $P\in \A_2$ must contain a singleton distinct from $\{a\}$, $\{b\}$, $\{x_2\}$, $\dots$, $\{x_t\}$. Since $\A_2$ does not contain any $P$ with $\vert\sigma(P)\vert=t$, we have
 $\vert \A_2\vert\leq (n-t-1)(B_{n-t-2}-\tilde{B}_{n-t-2})=(n-t-1)\tilde{B}_{n-t-1}$ (by (\ref{ttt})), and 
\begin{align}
 \vert\A\vert &\leq B_{n-t}-\tilde{B}_{n-t}-(n-t)\tilde{B}_{n-t-1}+(n-t-1)\tilde {B}_{n-t-1}\notag\\
  & = B_{n-t}-\tilde{B}_{n-t}-\tilde{B}_{n-t-1},\notag
\end{align}
a contradiction.  This completes the proof of the theorem.
\end{proof}

\begin{thm}\label{Case_2}
Let $\mathcal{A}$ be a non-trivial $t$-intersecting family of set
partitions of $[n]$ of maximum size. Suppose $\sigma(\A)$ is a
non-trivial $t$-intersecting family of subsets of $[n]$. Then, for
$n\geq n_0(t)$,
\[ |\A| = B_{n-t}-\tilde{B}_{n-t}-\tilde{B}_{n-t-1}+t.\]
Moreover, $\mathcal{A} = \mathcal{H}(a_1,\dots, a_t,b)$ for some $a_1,\dots, a_t, b \in
[n]$.
\end{thm}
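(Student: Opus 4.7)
The plan is to reduce to a compressed $\A$ via Theorem~\ref{Case_1} and Proposition~\ref{undo}, bound $|\A|$ by a weighted sum over the singleton family $\sigma(\A)$ using Theorem~\ref{EKR} layer-by-layer, and then combine the estimates in Lemmas~\ref{less}--\ref{less03} with the non-triviality of $\sigma(\A)$ to pin down the Hilton-Milner structure.

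If $\A$ is not compressed, Theorem~\ref{Case_1} produces $k,l\in[n]$ such that $S_{kl}(\A)\neq\A$ is non-trivially $t$-intersecting; iterating one reaches a compressed family $\A^{*}$ of the same size, and Proposition~\ref{undo} transports any identification $\A^{*}=\mathcal{H}(a_{1},\dots,a_{t},b)$ back to $\A$, so I may assume $\A$ is compressed. By Proposition~\ref{P3}, $\sigma(\A)$ is then $t$-intersecting; the hypothesis adds non-triviality. Next write
\[
|\A|=\sum_{S\in\sigma(\A)}|\{P\in\A:\sigma(P)=S\}|\leq\sum_{S\in\sigma(\A)}\tilde{B}_{n-|S|},
\]
and split the sum at the threshold $M=\lfloor n/(t+1)+t-1\rfloor$. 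By Lemma~\ref{less03} the contribution from $|S|>M$ is less than $\tilde{B}_{n-t-1}$. For each $t\leq k\leq M$ the layer $\mathcal{F}_{k}=\{S\in\sigma(\A):|S|=k\}$ is a $t$-intersecting family of $k$-subsets with $n\geq(k-t+1)(t+1)$, so Theorem~\ref{EKR} gives $|\mathcal{F}_{k}|\leq\binom{n-t}{k-t}$ with equality only for a full $t$-star. Since $\sum_{k=t}^{n-t}\binom{n-t}{k-t}\tilde{B}_{n-k}=B_{n-t}$ by identity~(\ref{e1}), and $|\A|\geq|\mathcal{H}(a_{1},\dots,a_{t},b)|=B_{n-t}-\tilde{B}_{n-t}-\tilde{B}_{n-t-1}+t$, the slack in the EKR bound across the layers must be extremely tight.

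The final and most delicate step exploits the non-triviality of $\sigma(\A)$ to force the Hilton-Milner shape. A witness $S^{*}\in\sigma(\A)$ avoiding a common $t$-subset must nevertheless $t$-intersect every $S$ in each small layer; pushing $k$ down toward $t+1$ forces $S^{*}$ to be of the form $[n]\setminus\{a_{l},b\}$ for some $1\leq l\leq t$ and some single $b\notin\{a_{1},\dots,a_{t}\}$, which is exactly $\sigma(Q(a_{l},b))$. These exceptional witnesses in turn force all the small star layers to have the common kernel $\{a_{1},\dots,a_{t}\}$ and to omit $\{a_{1},\dots,a_{t},b\}$, so that $\sigma(\A)=\sigma(\mathcal{H}(a_{1},\dots,a_{t},b))$; Lemmas~\ref{less} and~\ref{less02} then absorb the lower-order terms and show that $|\{P\in\A:\sigma(P)=S\}|=\tilde{B}_{n-|S|}$ for every such $S$, recovering $\A=\mathcal{H}(a_{1},\dots,a_{t},b)$. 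The main obstacle will be precisely this structural rigidity: each cross-layer $t$-intersection step has to be played off against the available slack so that no additional partition can sneak into $\A$ without contradicting either $t$-intersection with some $Q(a_{l},b)$ or the extremal count.
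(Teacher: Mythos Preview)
Your layered framework---bounding $|\A|$ by $\sum_{S\in\sigma(\A)}\tilde B_{n-|S|}$, applying Theorem~\ref{EKR} to each $\F_k=\sigma(\A)\cap\binom{[n]}{k}$ for $k\le M=\lfloor n/(t+1)+t-1\rfloor$, and controlling the tail via Lemma~\ref{less03}---is exactly the paper's inequality~(\ref{e5}). (Two minor slips: the sum should start at $k=t+1$, since non-triviality forces $\F_t=\emptyset$; and the identity you quote should have upper limit $k=n$, not $k=n-t$.) Also, the opening reduction to a compressed family via Theorem~\ref{Case_1} and Proposition~\ref{undo} is unnecessary here: the hypothesis of Theorem~\ref{Case_2} already hands you a non-trivially $t$-intersecting $\sigma(\A)$, and the paper's argument uses nothing else about $\A$.

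The genuine gap is your ``final and most delicate step''. What you wrote is a hope, not an argument, and the reasoning is circular: you speak of a witness $S^{*}$ avoiding the common kernel $\{a_1,\dots,a_t\}$ and deduce that $S^{*}=[n]\setminus\{a_l,b\}$, but the existence of a common $t$-kernel for the small layers is precisely what must be proved, and even granting it, $t$-intersection with $\F_{t+1}=\{\{a_1,\dots,a_t,i\}:i\in I\}$ only yields $S^{*}\supseteq(\{a_1,\dots,a_t\}\setminus\{a_l\})\cup I$, which says nothing about $|S^{*}|$ or about $b$ until you already know $I=[n]\setminus\{a_1,\dots,a_t,b\}$.

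The paper handles this by a concrete case analysis on $\F_{t+1}$. First, if $\F_{t+1}=\emptyset$ the sum in (\ref{e5}) starts at $k=t+2$ and Lemma~\ref{less03} gives $|\A|<B_{n-t}-\tilde B_{n-t}-\tilde B_{n-t-1}$, contradicting maximality. Next, if $\bigl|\bigcap_{F\in\F_{t+1}}F\bigr|<t$, an elementary argument shows every $\sigma(P)$ contains one of at most $t+2$ fixed $(t+1)$-sets, whence $|\A|\le(t+2)B_{n-t-1}$, again too small by Lemma~\ref{less}. Thus $\F_{t+1}=\{\{a_1,\dots,a_t,i\}:t+1\le i\le r\}$ for some kernel and some $r\le n-1$ (with $r=n$ forbidden by non-triviality). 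One then rules out each range $r=t+1$, $t+2\le r\le t+3$, and $t+4\le r\le n-2$ separately, using Lemmas~\ref{less}--\ref{less03}; only $r=n-1$ survives, and then every $P\in\A$ is forced into $\mathcal{H}(a_1,\dots,a_t,b)$, with equality by maximality. Your sketch does not supply any of these steps, and the witness heuristic you propose cannot replace them.
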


\begin{proof}
For $k \ge t+1$, let $\F_{k} = \sigma(\A) \cap {[n] \choose k}$.
Since $\sigma(\A)$ is $t$-intersecting, by applying the Erd{\H
o}s-Ko-Rado theorem to $\F_{k}$ for each $k \le \lfloor \frac{n}{t+1}+t-1
\rfloor$, we have
\begin{eqnarray}
|\A| \le \sum_{k=t+1}^{\lfloor \frac{n}{t+1}+t-1
\rfloor} {n-t \choose
k-t}\tilde{B}_{n-k} + \sum_{k=\lfloor \frac{n}{t+1}+t-1
\rfloor + 1}^{n} {n
\choose k} \tilde{B}_{n-k}. \label{e5}
\end{eqnarray}

We consider the following cases.

\noindent {\bf Case 1.} $\F_{t+1} = \emptyset$.

Then the sum in (\ref{e5}) starts from $k=t+2$, and by  (\ref{e1}) and Lemma \ref{less03}:

\begin{eqnarray}
|\A| & \le & \sum_{k=t+2}^{\lfloor \frac{n}{t+1}+t-1
\rfloor} {n-t \choose
k-t}\tilde{B}_{n-k} + \sum_{k=\lfloor \frac{n}{t+1}+t-1
\rfloor + 1}^{n} {n
\choose k} \tilde{B}_{n-k} \notag\\
& < & \sum_{k=t+2}^{n} {n-t \choose
k-t}\tilde{B}_{n-k} + \tilde{B}_{n-t-1}\notag
\\
& = & B_{n-t}-{n-t \choose 0}\tilde{B}_{n-t}-{n-t \choose
1}\tilde{B}_{n-t-1} + \tilde{B}_{n-t-1} \notag\\
& = &
B_{n-t}-\tilde{B}_{n-t}-\tilde{B}_{n-t-1}-(n-t-2)\tilde{B}_{n-t-1}\notag
\\
& < & B_{n-t}-\tilde{B}_{n-t}-\tilde{B}_{n-t-1}\notag
\end{eqnarray}
for sufficiently large $n$. This contradicts the maximality of $\A$.

\noindent {\bf Case 2.} $\F_{t+1} \not = \emptyset$.

\noindent {\bf Subcase 2.1.} $\vert \cap_{F \in \F_{t+1}} F \vert <t$.

Then there exist three sets $F_1, F_2$, $F_3 \in \mathcal{F}_{t+1}$ such that $F_1\cap F_2\nsubseteq F_3$. Note that $F_3$ must contain the symmetric difference $F_1\Delta F_2$, and since $\vert F_3\cap F_i\vert\geq t$ for $i=1,2$, $F_3$ must take the form $(F_1\cup F_2)\setminus \{x\}$ for some $x\in F_1\cap F_2$. Indeed, all sets in $\F_{t+1}$ other than $F_1$ and $F_2$ must also have this form.

Let
\begin{align}
A_0 &= \{1,2,\dots,t,t+1\},\notag\\
A_1 &= \{1,2,\dots, t,t+1,t+2\}\setminus \{1\},\notag\\
A_2 &= \{1,2,\dots, t,t+1,t+2\}\setminus \{2\},\notag\\
&\vdots\notag\\
A_{t+1} &= \{1,2,\dots, t,t+1,t+2\}\setminus \{t+1\}.\notag
\end{align}
Without loss of generality, we may assume that $A_0,A_1,A_2\in \F_{t+1}$ and $\F_{t+1}\subseteq \{ A_0,A_1,A_2,\dots, A_{t+1}\}$.  In
view of the $t$-intersecting property of $\sigma(\A)$, if $P\in \A$ and $i\notin \sigma(P)$ for some $1\leq i\leq t+1$, then $A_i\subseteq \sigma(P)$, for  $A_0,A_1,A_2\in \F_{t+1}$. Hence for any $P \in \A$, $A_i \subseteq \sigma(P)$ for some $0\leq i\leq t+1$.
Now for sufficiently large $n$ (Lemma \ref{less}),
\[ |\A| \le (t+2) B_{n-t-1} < B_{n-t}-\tilde{B}_{n-t}-\tilde{B}_{n-t-1}, \]
contradicting the maximality of $\A$.

\noindent {\bf Subcase 2.2.} $\vert \cap_{F \in \F_{t+1}} F \vert=t$.

Without loss of generality, there exists $r \ge t+1$ such that
\[ \F_{t+1} = \{\{1,2,\dots, t, i\}: t+1 \le i \le r\}\]
for some $r \in \{t+1, \ldots,n\}$. Notice that $r \le n-1$;
otherwise, all the set partitions in $\A$ will contain $\{1\}$, $\{2\}$, $\dots$, $\{t\}$, 
contradicting the non-triviality of $\sigma(\A)$.

 Let $P \in \A$. Then either
$\{1,2,\dots, t\}\subseteq \sigma(P)$, or there is a $j\in \{1,2,\dots, t\}$ with $j \not \in \sigma(P)$ and $(\{1,2,\dots, t\}\setminus \{j\})\cup \{t+1,\dots, r\}\subseteq \sigma(P)$ (since $\sigma(P)$ must intersect every element in $\F_{t+1}$). In the former, we cannot have $\sigma(P)  = \{1,2,\dots, t\}$ or $\sigma(P) = \{1,2,\dots, t,x\}$ for all $x \in [n] \setminus \{1,2 \ldots, t, t+1,\dots, r\}$; in the later, $(\{1,2,\dots, t\}\setminus \{j\})\cup \{t+1,\dots, r\}\subseteq \sigma(P)$ where $j$ can take at most $t$ values. So if $t+4 \le r \le n-2$, then 

\begin{eqnarray}
|\A| & \le & B_{n-t}-\tilde{B}_{n-t}-{n-r \choose
1}\tilde{B}_{n-t-1} + tB_{n-r+1}\notag \\
& < & B_{n-t}-\tilde{B}_{n-t}-\tilde{B}_{n-t-1}
~~~(\textrm{Lemma}~ \ref{less02}).\notag
\end{eqnarray}

Suppose  $t+2 \le r \le t+3$. Assume $\{1,2,\dots, t\}\subseteq \sigma(P)$. The number of $P\in \A$ with $\{1,2,\dots, t,i\}\subseteq \sigma(P)$ ($t+1\leq i\leq r$) is at most $3B_{n-t-1}$. The number of $P\in \A$ with $\{1,2,\dots, t,i\}\nsubseteq \sigma(P)$ for all $i=t+1,t+2,\dots, r$, is at most $\sum_{k=2}^{n-r} {n-r \choose k} \tilde B_{n-r-k}< B_{n-r}<B_{n-t-1}$.  Therefore for sufficiently large $n$ (Lemma \ref{less}),
\[ |\A| \le 3B_{n-t-1} + B_{n-t-1} +  tB_{n-r+1}\leq (t+4)B_{n-t-1}<
B_{n-t}-\tilde{B}_{n-t}-\tilde{B}_{n-t-1}. \]

Suppose $r=t+1$ i.e. $\F_{t+1}=\{\{1,2,\dots, t, t+1\}\}$.  As in Case 1, for sufficiently large $n$,
\begin{eqnarray}
|\A| & \le & \tilde{B}_{n-t-1} + \sum_{k=t+2}^{\lfloor \frac{n}{t+1}+t-1
\rfloor} {n-t \choose
k-t}\tilde{B}_{n-k} + \sum_{k=\lfloor \frac{n}{t+1}+t-1
\rfloor+ 1}^{n} {n
\choose k} \tilde{B}_{n-k}\notag \\
& < & B_{n-t}-\tilde{B}_{n-t}-\tilde{B}_{n-t-1}-(n-t-3)\tilde{B}_{n-t-1} \notag
\\
& < & B_{n-1}-\tilde{B}_{n-1}-\tilde{B}_{n-2}.\notag
\end{eqnarray}

Hence, $r=n-1$ and $\A = \mathcal{H}(1,2,\dots, t, n)$.

\end{proof}

\noindent {\bf Proof of Theorem \ref{main}}.

Let $\A$ be a non-trivial $t$-intersecting family of maximum size.
Repeatedly apply the splitting operations until we obtain a family $\A^{*}$ such that $\A^{*}$ is compressed (Proposition \ref{P2}). Note that by Theorem \ref{Case_1}, we may choose the splitting operations so that $\A^{*}$
is non-trivially $t$-intersecting. Therefore $\sigma(\A^{*})$ is non-trivially $t$-intersecting (for $\sigma(\A^{*})$ is $t$-intersecting by Proposition \ref{P3}), and the result follows from Theorem \ref{Case_2} and Proposition \ref{undo}.

\end{document}